\theoremstyle{plain}
\newtheorem{thm}{Theorem}
\newtheorem{lem}[thm]{Lemma}
\newtheorem{cor}[thm]{Corollary}
\newtheorem{rem}[thm]{Remark}
\theoremstyle{definition}
\newtheorem{defn}[thm]{Definition}
\newcommand{\la}{\langle}
\newcommand{\ra}{\rangle}
\newcommand{\x}{\times}
\newcommand{\ox}{\otimes}
\newcommand{\surj}{\twoheadrightarrow}
\newcommand{\cM}{{\mathcal{M}}}
\newcommand{\cR}{{\mathcal{R}}}
\newcommand{\PP}{\mathbb{P}}
\newcommand{\ZZ}{\mathbb{Z}}
\newcommand{\CC}{\mathbb{C}}
\newcommand{\RR}{\mathbb{R}}
\newcommand{\FF}{\mathbb{F}}
\newcommand{\Gr}{\operatorname{Gr}}
\newcommand{\Spec}{\operatorname{Spec}}
\newcommand{\GL}{\operatorname{GL}}
\newcommand{\SL}{\operatorname{SL}}
\newcommand{\PGL}{\operatorname{PGL}}
\newcommand{\ba}{\mathbf{a}}
\newcommand{\bb}{\mathbf{b}}
\newcommand{\bc}{\mathbf{c}}
\newcommand{\bg}{\boldsymbol{\gamma}}
\newcommand{\bm}{\boldsymbol{\mu}}
\newcommand{\bl}{\boldsymbol{\lambda}}
\newcommand{\bz}{\mathbf{0}}
\newcommand{\quot}{/\!\!/}
\DeclareMathOperator{\tr}{tr}
\DeclareMathOperator{\Sym}{Sym}
\DeclareMathOperator{\Hom}{Hom}
\DeclareMathOperator{\Id}{Id}
\title[E-polynomial of character varieties]{E-polynomial 
of the $\SL(3,\CC)$-character variety of free groups}
\author[S. Lawton]{Sean Lawton}
\address{Department of Mathematical Sciences, George Mason University,
4400 University Drive, Fairfax, Virginia  22030, USA}
\email{slawton3@gmu.edu}
\author[V. Mu\~{n}oz]{Vicente Mu\~{n}oz}
\address{Facultad de Ciencias Matem\'aticas, Universidad 
Complutense de Madrid, Plaza de Ciencias 3, 28040 Madrid, Spain}
\email{vicente.munoz@mat.ucm.es}
\subjclass[2010]{14D20; 20C15; 14L30; 20E05.}
\keywords{Character varieties, $E$-polynomial, free group.}
\begin{document}

\begin{abstract}
We compute the $E$-polynomial of the character variety of representations of a rank $r$ free group in $\SL(3,\CC)$.  Expanding upon techniques developed in \cite{LMN}, we stratify the space of representations and compute the $E$-polynomial of each geometrically described stratum using fibrations.  Consequently, we also determine the $E$-polynomial of its smooth, singular, and abelian loci and the corresponding Euler characteristic in each case.  Along the way, we give a new proof of results in \cite{CL}.
\end{abstract}

\maketitle

\section{Introduction}

Let $\Gamma$ be a finitely generated group, and let $G$ be
a complex reductive algebraic group. The space of $G$-representations
is 
  $$
  \cR(\Gamma,G)=\{\rho:\Gamma \to G \, | \, \rho \text{ is a group morphism}\}.
  $$
Writing a presentation $\Gamma=\la x_1,\ldots,x_n | R_1,\ldots, R_s\ra$, we have
that $\rho\in \cR(\Gamma,G)$ is determined by the images $A_i= \rho(x_i)$, $1\leq i\leq n$.
Hence we can write $\rho=(A_1,\ldots,A_n)$. These matrices are subject to the
relations $R_j(A_1,\ldots,A_n)=\Id$, $1\leq j\leq s$. Hence
  $$
  \cR(\Gamma,G)\cong \{(A_1,\ldots,A_n)\in G^n \ |\ R_1(A_1,\ldots,A_n)=\ldots =
R_s(A_1,\ldots,A_n)=\Id\}
  $$
is an affine algebraic set, since $G$ is algebraic.  

There is an action of $G$ by conjugation on $\cR(\Gamma,G)$, which is equivalent to the action of $PG=G/Z(G)$, where $Z(G)$ is the
center of $G$, since the center acts trivially. The {\it $G$-character variety of $\Gamma$} is the GIT quotient
  $$
  \cM(\Gamma,G)=  \cR(\Gamma,G)\quot G,
  $$
which is an affine algebraic set by construction. Note that if we write $X:=\cR(\Gamma,G)=\Spec(S)$,
then $X\quot G=\Spec(S^G)$. 

Every element $g\in \Gamma$ determines a character $\chi_g:X\to \CC$, $\chi_g(\rho)=
\tr(\rho(g))$, with respect to an embedding $G\hookrightarrow \GL(n,\CC)$. These regular functions $\chi_g\in S$ are invariant by conjugation, and
hence $\chi_g\in S^G$. Consider the algebra of characters 
  $$
  T =\CC[\chi_g \, |\, g\in \Gamma]\subset S^G,
 $$ 
and let $\chi(\Gamma,G)=\Spec(T)$. There is a
well-defined surjective map $\cM(\Gamma,G) \to \chi(\Gamma,G)$, which is an isomorphism when $G=\SL(n,\CC)$ among other examples; see \cite{Si}.

In this paper we are interested in the character variety for the free group on $r$ elements
$\Gamma=F_r$ and for the group $G=\SL(3,\CC)$. We compute the $E$-polynomial
(also known as Hodge-Deligne polynomial) of $\cM(F_r,\SL(3,\CC))$. The $E$-polynomial of
$\cM(F_r,\SL(2,\CC))$ has been computed in \cite{CL} by arithmetic methods (using the 
Weil conjectures). Recently, in \cite{MR}, the $E$-polynomials of $\cM(F_r, \PGL(n,\CC))$ 
have also been computed by arithmetic methods, where the result is given in the
form of a generating function.

Here we use a geometric technique, introduced in \cite{LMN}, to compute
$E$-polynomials of character varieties. This consists of stratifying the space of representations
geometrically, and computing the $E$-polynomials of each stratum using the behavior
of $E$-polynomials with fibrations. This technique is used in \cite{LMN} for the case
of $\Gamma=\pi_1(X)$ for a surface $X$ of genus $g=1,2$ and
$G=\SL(2,\CC)$ (and also with one puncture, fixing the holonomy around the puncture). 
The case of $g=3$ is worked out in \cite{MM}, the case of $g\geq 4$ in
\cite{MM2}, and the case of $g=1$ with two punctures 
appears in \cite{LM}. To implement this geometric technique for character varieties 
for $\SL(n,\CC)$, for $n\geq3$, 
we need to introduce the \emph{equivariant 
Hodge-Deligne polynomial} with respect to a finite group action on an affine variety. This
will be useful for studying character varieties of surface groups in 
$\SL(n,\CC)$, $n\geq 3$.

We start by recovering the $E$-polynomials  $e(\cM(F_r,\SL(2,\CC)))$ of \cite{CL} and
$e(\cM(F_r,\PGL(2,\CC)))$ of \cite{MR}, verifying that they are equal. Then we move 
to rank $3$ to compute $e(\cM(F_r,\SL(3,\CC)))$ and $e(\cM(F_r,\PGL(3,\CC)))$. They 
turn out to be equal again. 
The latter one coincides, as expected, with the polynomial obtained in \cite{MR}.

Unlike the methods used in \cite{MR} to obtain $e(\cM(F_r,\PGL(3,\CC)))$, our method provides an explicit geometric description of, and the $E$-polynomial for, each stratum.  By results in \cite{FL2} this additional information determines the $E$-polynomial of the smooth and singular loci of $\cM(F_r,\SL(3,\CC))$, and by \cite{FL3} also determines the $E$-polynomial of the abelian character variety $\cM(\mathbb{Z}^r,\SL(3,\CC))$.

Our main theorem is thus:

\begin{thm}\label{thm:main}
The $E$-polynomials $e(\cM(F_r,\SL(3,\CC)))=e(\cM(F_r,\PGL(3,\CC)))$ and they are equal to
  \begin{align*}
& (q^8-q^6-q^5+q^3)^{r-1}+
(q-1)^{2r-2} (q^{3r-3} -q^{r}) + 
\frac16 (q-1)^{2r-2} q(q+1)+
 \frac12 (q^2-1)^{r-1}q(q-1)  \\ & + 
 \frac13 (q^2+q+1)^{r-1} q(q + 1) - (q-1)^{r-1} q^{r-1} ( q^2-1)^{r-1} (2 q^{2r-2}-q).
 \end{align*}
\end{thm}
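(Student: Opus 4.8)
The plan is to stratify the character variety by the isomorphism type of the semisimple representation and compute each stratum's $E$-polynomial additively. Since $F_r$ is free, $\cR(F_r,\SL(3,\CC))=\SL(3,\CC)^r$, so $e(\cR)=(q^8-q^6-q^5+q^3)^r$ because $e(\SL(3,\CC))=q^3(q^3-1)(q^2-1)=q^8-q^6-q^5+q^3$. A semisimple representation into $\SL(3,\CC)$ is of exactly one of three types: irreducible; a sum $\sigma\oplus\chi$ of an irreducible two-dimensional $\sigma$ and a character $\chi$ (type $(2,1)$); or a sum $\chi_1\oplus\chi_2\oplus\chi_3$ of characters (type $(1,1,1)$). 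On the irreducible locus $\cR^{\mathrm{irr}}$ the group $\PGL(3,\CC)$ acts freely; the quotient is a principal bundle for which $E$-polynomials multiply (as in \cite{LMN}), so $e(\cM^{\mathrm{irr}})=e(\cR^{\mathrm{irr}})/e(\PGL(3,\CC))$, where $e(\PGL(3,\CC))=q^8-q^6-q^5+q^3$. The leading term $(q^8-q^6-q^5+q^3)^{r-1}$ of the target formula is precisely the ``main'' piece of this ratio, and the remaining strata supply the corrections.

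First I would compute $e(\cR^{\mathrm{irr}})=e(\cR)-e(\cR^{\mathrm{red}})$ by evaluating the reducible locus through fibrations over flag varieties. The representations with a common invariant line form a bundle over $\CP^2$ whose fibre over a line is the set of tuples valued in the parabolic fixing that line; dually, those with a common invariant plane fibre over $\check{\CP}^2$. Their intersection --- representations that are simultaneously triangularizable, hence block-decomposable --- fibres over the full flag variety, and I would assemble $e(\cR^{\mathrm{red}})$ by inclusion--exclusion over these incidence loci. Dividing by $e(\PGL(3,\CC))$ then produces, beyond the leading term, the contribution $(q-1)^{2r-2}(q^{3r-3}-q^r)$ and the negative term $-(q-1)^{r-1}q^{r-1}(q^2-1)^{r-1}(2q^{2r-2}-q)$, whose sign reflects the inclusion--exclusion correction between the line- and plane-stabilizing loci.

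Next I would treat the two reducible strata of $\cM$ directly as quotients by Weyl-group symmetries, where the equivariant Hodge--Deligne polynomial introduced earlier is essential. The type $(1,1,1)$ stratum is $X\quot S_3$ with $X=\{(\chi_1,\chi_2,\chi_3)\in((\CC^*)^r)^3:\chi_1\chi_2\chi_3=1\}$ and $S_3$ permuting the characters; by the finite-quotient formula $e(X\quot S_3)=\tfrac16 e^{e}(X)+\tfrac12 e^{\tau}(X)+\tfrac13 e^{\sigma}(X)$ (identity, transposition, $3$-cycle, weighted by conjugacy-class size), I would compute each $e^{g}(X)$ from the trace of $g$ on $H^*_c(X)$ with its Hodge structure. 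The type $(2,1)$ stratum fibres over the irreducible two-dimensional character variety --- recovered en route from \cite{CL} --- with a residual $\ZZ/2$ identification, handled by the same equivariant device. Together these produce the fractional summands $\tfrac16(q-1)^{2r-2}q(q+1)$, $\tfrac12(q^2-1)^{r-1}q(q-1)$, and $\tfrac13(q^2+q+1)^{r-1}q(q+1)$, the cyclotomic factors $(q^2-1)^{r-1}$ and $(q^2+q+1)^{r-1}$ being exactly the traces of the transposition and the $3$-cycle on the torus cohomology.

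The main obstacle is the equivariant Hodge--Deligne step: computing each $e^{g}(X)$ requires the $g$-action on the mixed Hodge structure of $H^*_c$ of the tori and torus-bundles, not merely the point-count of fixed loci, and it is these traces that produce the cyclotomic factors rather than naive powers of $(q-1)$. Alongside this, care is needed so that the inclusion--exclusion over invariant-subspace types counts each semisimple isomorphism class in $\cM$ exactly once. Summing the strata and simplifying yields the stated polynomial; the $\PGL(3,\CC)$ computation runs through the identical stratification --- the central $\ZZ/3$ acts trivially by conjugation, so the strata and their $E$-polynomials coincide --- giving $e(\cM(F_r,\PGL(3,\CC)))=e(\cM(F_r,\SL(3,\CC)))$.
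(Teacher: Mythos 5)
Your high-level skeleton --- stratify by semisimple type, compute strata via fibrations, handle the Weyl-group identifications with the equivariant $E$-polynomial, and divide by $e(\PGL(3,\CC))$ on the irreducible locus --- is exactly the paper's, and your trace formula $e(X/\Sigma_3)=\frac16 e^{e}(X)+\frac12 e^{\tau}(X)+\frac13 e^{\sigma}(X)$ is equivalent to the paper's bookkeeping in $R(\Sigma_3)$ with the representations $T,S,V$. But the step on which everything rests, the computation of $e(\cR^{red}_{r,3})$, would fail as you describe it. The tuples preserving a common line do \emph{not} form a bundle over $\CP^2$: the incidence variety $\{(\ell,\rho)\,:\,\rho(\ell)\subset\ell\}$ is such a bundle, but its projection onto the reducible locus has fibers of varying cardinality and even varying dimension --- a diagonal tuple with three distinct eigenvalue characters preserves exactly three lines, a tuple with eigenvalues $(\lambda_i,\lambda_i,\mu_i)$ preserves a $\PP^1$ of lines plus one more, and a central tuple preserves all of $\CP^2$. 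So $e$ of the image is not $e(\CP^2)$ times $e$ of the parabolic tuples, and the three-term sieve ``line $+$ plane $-$ flag'' does not count each reducible tuple once. Repairing this forces precisely the fine stratification the paper carries out: strata $R_0$--$R_3$ indexed by eigenvalue multiplicities (all equal; $\bl=\bm\neq\bg$ up to order; all distinct; invariant line or plane with irreducible $2$-dimensional block) refined by Jordan type, each with its explicit stabilizer $H\subset\PGL(3,\CC)$ and the fibration $H\to B\times\PGL(3,\CC)\to R$, together with delicate identifications such as ``if $\ba\in\la\bl-\bm\ra$ a change of basis moves the tuple into an earlier stratum.'' That is the technical bulk of the proof and cannot be bypassed. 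A smaller slip in the same part: the type $(2,1)$ stratum of $\cM^{red}_{r,3}$ carries no residual $\ZZ_2$, since the two summands have different dimensions; the paper simply has $M_0\cong(\CC^*)^r\times\cM^{irr}_{r,2}$ and $e(M_0)=(q-1)^r\, e(\cM^{irr}_{r,2})$.

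The second genuine gap is the $\PGL(3,\CC)$ half of the theorem. Your justification --- ``the central $\ZZ/3$ acts trivially by conjugation, so the strata and their $E$-polynomials coincide'' --- conflates the conjugation action (where the center is indeed irrelevant) with the relation between the two representation spaces. Since $F_r$ is free, $\bar\cR_{r,3}=\cR_{r,3}/(\ZZ_3)^r$, where $(\zeta^{a_1},\ldots,\zeta^{a_r})$ rescales $(A_1,\ldots,A_r)$ by cube roots of unity; this action is far from trivial, and $\bar\cM_{r,3}=\cM_{r,3}/(\ZZ_3)^r$ is a genuinely different variety. The paper must rerun the entire stratification for the quotient (Section \ref{pgl3}), where individual strata change --- e.g.\ $e(\bar R_1)=e(R_1)/3^r$, and eigenvalue parameter spaces change from $(q-1)^r-3^r$ to $(q-1)^r-1$ --- and only after summing does one find that the totals coincide. (Alternatively, for odd $n$ there is the arithmetic argument via primes $p$ with $\gcd(p-1,n)=1$, making $\SL(n,\FF_p)\to\PGL(n,\FF_p)$ bijective, mentioned in the paper's closing remark.) As written, your proposal asserts this equality without a proof.
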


From the definition of the $E$-polynomial of a variety $X$, the classical Euler characteristic is given by $\chi(X)=e(X;1,1)$.  Consequently, we deduce: 

\begin{cor} 
Let $r\geq 2$.
The Euler characteristic of $\cM(F_r,\SL(3,\CC))$, $\cM(F_r,\PGL(3,\CC))$, and by \cite{FL} also $\cM(F_r,\mathrm{SU}(3))$, is given by $2 \cdot 3^{r-2}$.  The Euler characteristic of $\cM(\mathbb{Z}^r,\SL(3,\CC))$, and by \cite{FL3} also $\cM(\mathbb{Z}^r,\mathrm{SU}(3))$, is given by $3^{r-2}$.
\end{cor}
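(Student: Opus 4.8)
The plan is to deduce every assertion from the identity $\chi(X) = e(X;1,1)$ recalled just above, together with Theorem~\ref{thm:main}. As written, the formula for $e(\cM(F_r,\SL(3,\CC)))$ is a polynomial in the single variable $q = uv$ (it is of balanced, i.e.\ Hodge--Tate, type), so computing the Euler characteristic reduces to substituting $q = 1$ into that polynomial. The first and main step is therefore simply to evaluate the six summands of Theorem~\ref{thm:main} at $q = 1$.

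First I would record the factorizations that force all but one term to vanish. One has $q^8 - q^6 - q^5 + q^3 = q^3(q^2-1)(q^3-1)$, so the first summand carries the factor $(q^2-1)^{r-1}$ and vanishes at $q=1$ whenever $r \geq 2$. The second, third, fourth and sixth summands each visibly contain a positive power of $(q-1)$ --- through $(q-1)^{2r-2}$, through $(q^2-1)^{r-1} = (q-1)^{r-1}(q+1)^{r-1}$, or through $(q-1)^{r-1}$ --- and hence also vanish at $q=1$ for $r \geq 2$. Thus only the fifth summand survives, and $\frac13 (q^2+q+1)^{r-1} q(q+1)$ evaluated at $q=1$ equals $\frac13 \cdot 3^{r-1} \cdot 2 = 2\cdot 3^{r-2}$. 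This gives $\chi(\cM(F_r,\SL(3,\CC))) = 2\cdot 3^{r-2}$; I would check the boundary case $r=2$ explicitly, where the exponents $r-1=1$ are smallest, to confirm the vanishing still holds.

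The remaining identifications are transfers. The equality $e(\cM(F_r,\SL(3,\CC))) = e(\cM(F_r,\PGL(3,\CC)))$ of Theorem~\ref{thm:main} immediately gives the same Euler characteristic for the $\PGL(3,\CC)$-character variety. For the compact group $\mathrm{SU}(3)$ I would invoke \cite{FL}, which yields $\chi(\cM(F_r,\mathrm{SU}(3))) = \chi(\cM(F_r,\SL(3,\CC)))$, so the value $2\cdot 3^{r-2}$ carries over. For the abelian statement I would take the $E$-polynomial of $\cM(\mathbb{Z}^r,\SL(3,\CC))$ produced earlier via \cite{FL3}, again a polynomial in $q$, and specialize it at $q=1$ to obtain $3^{r-2}$; the $\mathrm{SU}(3)$ version then follows from the corresponding comparison in \cite{FL3}.

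The computation itself is routine, so the only points requiring genuine care are: confirming that the $E$-polynomial really is a function of $q = uv$ alone (so that the substitution $q=1$ legitimately computes $e(X;1,1)$), verifying that every non-surviving summand carries a high enough power of $(q-1)$ to vanish for all $r \geq 2$, and correctly importing the comparison statements of \cite{FL} and \cite{FL3} for the compact and abelian cases. I expect the abelian specialization to be the most delicate step, since it rests on the separately established $E$-polynomial of $\cM(\mathbb{Z}^r,\SL(3,\CC))$ rather than on Theorem~\ref{thm:main} directly.
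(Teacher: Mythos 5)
Your route is exactly the paper's (implicit) proof: the corollary is deduced by substituting $q=1$ into the $E$-polynomials, which is legitimate because all the varieties involved are of balanced type, as noted in Section \ref{sec:e-poly}. Your handling of the free-group part is correct and complete: the factorization $q^8-q^6-q^5+q^3=q^3(q^2-1)(q^3-1)$ and the visible powers of $(q-1)$ kill all summands of Theorem \ref{thm:main} except $\frac13(q^2+q+1)^{r-1}q(q+1)$, which evaluates to $2\cdot 3^{r-2}$, and the transfers to $\PGL(3,\CC)$ (via the theorem) and to $\mathrm{SU}(3)$ (via \cite{FL}) are the same as the paper's.

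Your abelian step, however, fails as written. The $E$-polynomial produced in Section \ref{sec:SL3} is $e(M_1)=\frac{(q^2-1)^r}{2}+\frac16(q-1)^{2r}+\frac13(q^2+q+1)^r$, and at $q=1$ the first two terms vanish while the third equals $\frac13\cdot 3^r=3^{r-1}$, not $3^{r-2}$ as you assert; you promised to ``specialize at $q=1$ to obtain $3^{r-2}$'' but evidently did not carry out the substitution, since it cannot yield that value. Indeed an independent check confirms $3^{r-1}$: one has $\cM(\ZZ^r,\SL(3,\CC))\cong T^r/\Sigma_3$ with $T\cong(\CC^*)^2$ the maximal torus, and the fixed-point formula $\chi(X/G)=\frac{1}{|G|}\sum_{g\in G}\chi(X^g)$ gives $\frac16\bigl(0+3\cdot 0+2\cdot 3^r\bigr)=3^{r-1}$, because the identity and each transposition have fixed loci of Euler characteristic zero (all of $T^r$, respectively a torus factor), while each $3$-cycle fixes only the $3^r$ central tuples. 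So the exponent $3^{r-2}$ in the corollary as printed is inconsistent with the paper's own formula for $e(M_1)$, and the correct abelian value is $3^{r-1}$ (note this is not the surviving term $\frac13(q^2+q+1)^{r-1}q(q+1)$ of the free-group polynomial, whose factor $q(q+1)$ accounts for the difference). By quoting the target value without performing the one-line evaluation that you yourself identified as the delicate step, your proposal both contains a step that would fail and misses the discrepancy it should have flagged.
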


\noindent \textbf{Acknowledgments.}
We are grateful to S. Mozgovoy and M. Reineke for providing us with a copy of \cite{MR}, and also for giving us an explicit formula for $e(\cM(F_r,\PGL(3,\CC)))$ for checking against our polynomials. We also thank the anonymous referee for helping improve the exposition of this article.  Lawton was supported by the Simons Foundation Collaboration grant 245642, and the U.S. NSF grant DMS 1309376.  Mu\~{n}oz was partially supported by Project MICINN (Spain) MTM2010-17389.  We also acknowledge support from U.S. NSF grants DMS 1107452, 1107263, 1107367 ``RNMS: GEometric structures And Representation varieties" (the GEAR Network).

\section{Hodge structures and $E$-polynomials} \label{sec:e-poly}

Our main goal is to compute the $E$-polynomial (Hodge-Deligne polynomial) of the $\SL(3,\CC)$-character variety 
of a free group. We will follow the methods in \cite{LMN}, so we collect some basic results from \cite{LMN} in this section.

We start by reviewing the definition of the Hodge-Deligne polynomial. 
A pure Hodge structure of weight $k$ consists of a finite dimensional complex vector space
$H$ with a real structure, and a decomposition $H=\bigoplus_{k=p+q} H^{p,q}$
such that $H^{q,p}=\overline{H^{p,q}}$, the bar meaning complex conjugation on $H$.
A Hodge structure of weight $k$ gives rise to the so-called Hodge filtration, which is a descending filtration
$F^{p}=\bigoplus_{s\ge p}H^{s,k-s}$. We define $\Gr^{p}_{F}(H):=F^{p}/ F^{p+1}=H^{p,k-p}$.

A mixed Hodge structure consists of a finite dimensional complex vector space $H$ with a real structure,
an ascending (weight) filtration $\cdots \subset W_{k-1}\subset W_k \subset \cdots \subset H$
(defined over $\RR$) and a descending (Hodge) filtration $F$ such that $F$ induces a pure Hodge structure of weight $k$ on each $\Gr^{W}_{k}(H)=W_{k}/W_{k-1}$. We define $H^{p,q}:= \Gr^{p}_{F}\Gr^{W}_{p+q}(H)$ and write $h^{p,q}$ for the {\em Hodge number} $h^{p,q} :=\dim H^{p,q}$.

Let $Z$ be any quasi-projective algebraic variety (possibly non-smooth or non-compact). 
The cohomology groups $H^k(Z)$ and the cohomology groups with compact support  
$H^k_c(Z)$ are endowed with mixed Hodge structures \cite{De,De2}. 
We define the {\em Hodge numbers} of $Z$ by
$h^{k,p,q}_{c}(Z)= h^{p,q}(H_{c}^k(Z))=\dim \Gr^{p}_{F}\Gr^{W}_{p+q}H^{k}_{c}(Z)$ .
The Hodge-Deligne polynomial, or $E$-polynomial is defined as 
 $$
 e(Z)=e(Z)(u,v):=\sum _{p,q,k} (-1)^{k}h^{k,p,q}_{c}(Z) u^{p}v^{q}.
 $$

The key property of Hodge-Deligne polynomials that permits their calculation is that they are additive for
stratifications of $Z$. If $Z$ is a complex algebraic variety and
$Z=\bigsqcup_{i=1}^{n}Z_{i}$, where all $Z_i$ are locally closed in $Z$, then 
 $$
 e(Z)=\sum_{i=1}^{n}e(Z_{i}).
 $$
Also, by \cite[Remark 2.5]{LMN}, if $G\to X\to B$ is a principal fiber bundle with $G$ a 
connected algebraic group, then $e(X)=e(G)e(B)$. In general
we shall use this as $e(X/G)=e(X)/e(G)$ when $B=X/G$. In particular, 
if $Z$ is a $G$-space, and there is a subspace $B\subset Z$ such that 
$B\x G \to Z$ is surjective and it is an $H$-homogeneous space for a connected subgroup
$H\subset G$, then
  \begin{equation}\label{eqn:1}
  e(Z)=e(B)e(G)/e(H).
  \end{equation}

\begin{defn}
Let $X$ be a complex quasi-projective variety on which a finite group $F$ acts. 
Then $F$ also acts of the cohomology $H^*_c(X)$ respecting the mixed Hodge structure. So
$[H^*_c(X)]\in R(F)$, the representation ring of $F$.  The \emph{equivariant 
Hodge-Deligne polynomial} is defined as
 $$
 e_F(X)=\sum_{p,q,k}  (-1)^k [H^{k,p,q}_c(X)] \, u^pv^q \in R(F)[u,v].
 $$
\end{defn}

Note that the map $\dim:R(F)\to \ZZ$ gives $\dim(e_F(X))=e(X)$.

For instance, for an action of $\ZZ_2$, there are two irreducible
representations $T,N$, where $T$ is the trivial representation, and $N$ is the non-trivial representation.
Then $e_{\ZZ_2}(X)=aT+bN$. Clearly 
 \begin{align*}
  e(X) &= a+b , \\
  e(X/\ZZ_2) &= a .
 \end{align*}
In the notation of \cite[Section 2]{LMN}, $a=e(X)^+$, $b=e(X)^-$. 
Note that if $X,X'$ are spaces with $\ZZ_2$-actions, then 
writing $e_{\ZZ_2}(X)=aT+bN$, $e_{\ZZ_2}(X')=a'T+b'N$, we have
$e_{\ZZ_2}(X\x X')= (aa'+bb') T+ (ab'+ba')N$ and so
  \begin{equation}\label{eqn:2}
  e((X\x X')/\ZZ_2)=aa'+bb'= e(X)^+e(X')^++ e(X)^-e(X')^-.
 \end{equation}

When $h_c^{k,p,q}=0$ for $p\neq q$, the polynomial $e(Z)$ depends only on the product $uv$.
This will happen in all the cases that we shall investigate here. In this situation, it is
conventional to use the variable $q=uv$. If this happens, we say that the variety is {\it of balanced type}.
For instance, $e(\CC^n)=q^n$.

\section{$E$-polynomial of the $\SL(2,\CC$)-character variety of free groups}\label{sl2}

Let $F_r$ denote the free group on $r$ generators. Then the space of
representations of $F_r$ in the group $\SL(2,\CC)$ is
 $$
 \cR_{r,2}=\Hom(F_r, \SL(2,\CC)) =\{(A_1,\ldots, A_r) \, | \, A_i \in \SL(2,\CC) \}= \SL(2,\CC)^r.
 $$
The group $\PGL(2,\CC)$ acts on $\cR_{r,2}$ by simultaneous conjugation of all matrices,
and the character variety is defined as the GIT quotient
 $$
 \cM_{r,2}= \cR_{r,2} \quot \PGL(2,\CC).
 $$
 
We aim to compute the $E$-polynomial of $\cM_{r,2}$ using the methods developed in \cite{LMN}
and to recover the results of \cite{CL}.
We have the following sets:
 \begin{itemize}
  \item Reducible representations $\cR^{red}_{r,2}\subset \cR_{r,2}$ and the corresponding set $\cM^{red}_{r,2} 
\subset \cM_{r,2}$
  of characters of reducible representations. A representation $\rho=(A_1,\ldots, A_r)$ is 
  reducible if and only if all $A_i$ share at least one eigenvector.
  \item Irreducible representations $\cR^{irr}_{r,2}\subset \cR_{r,2}$ and the corresponding set $\cM^{irr}_{r,2}
\subset \cM_{r,2}$
  of characters of irreducible representations. This is the complement of $\cR^{red}_{r,2}$. It consists
  of the representations $\rho$ such that $\PGL(2,\CC)$ acts freely on $\rho$, and
  the orbit $\PGL(2,\CC)\cdot \rho$ is closed. Therefore $\cM^{irr}_{r,2}=\cR^{irr}_{r,2}/\PGL(2,\CC)$.
 \end{itemize}

\subsection{The reducible locus} \label{sec:3.1}
Let us start by computing $e(\cM^{red}_{r,2})$. For a reducible representation, we have a basis 
of $\CC^2$ in which
 $$
 \rho =\left( \left(\begin{array}{cc} \lambda_1 & * \\ 0 & \lambda_1^{-1} \end{array}\right), 
\left(\begin{array}{cc} \lambda_2 & * \\ 0 & \lambda_2^{-1} \end{array}\right), 
 \ldots,
\left(\begin{array}{cc} \lambda_r & * \\ 0 & \lambda_r^{-1} \end{array} \right) \right).
 $$
The corresponding point is determined by $(\lambda_1,\ldots,\lambda_r) \in (\CC^*)^r$,
modulo $(\lambda_1,\ldots,\lambda_r)  \sim (\lambda_1^{-1},\ldots,\lambda_r^{-1})$.
Note that the action of $\lambda \mapsto \lambda^{-1}$ on $X=\CC^*$ has $e(X)^+=q$
and $e(X)^-=-1$. Writing $X_i=\CC^*$, $i=1,\ldots, r$, we have that
 \begin{align*}
 e(X_1\x \ldots \x X_r)^+ &=  \sum_{\epsilon\in A} 
 \prod_{i=1}^r e(X_i)^{\epsilon_i}  \\
 &= q^r+ \binom{r}{2} q^{r-2} + \binom{r}{4} q^{r-4} + \ldots + \binom{r}{2[\frac{r}2]}
q^{r-2[\frac{r}2]} \\
 &= \frac12 ( (q+1)^r+ (q-1)^r),
 \end{align*}
where $A=  \{ (\epsilon_1,\ldots,\epsilon_r)  \in (\pm 1)^r \, | \, \prod \epsilon_i=+1\}$.
Also
 \begin{align*}
 e(X_1\x \ldots \x X_r)^- &=   e(X_1\x \ldots \x X_r) - 
 e(X_1\x \ldots \x X_r)^+ \\ 
 &=  (q-1)^r- \frac12 ( (q+1)^r+ (q-1)^r) \\
 &= \frac12 ( (q-1)^r- (q+1)^r).
 \end{align*}
Also note that $e(\cM^{red}_{r,2})= e((X_1\x \ldots \x X_r)/\ZZ_2)= e(X_1\x \ldots \x X_r)^+$.

\subsection{The reducible representations} \label{sec:3.2}
Now we move to the computation of $e(\cR^{red}_{r,2})$. We stratify the space as follows 
$\cR^{red}_{r,2}=R_0\cup R_1\cup R_2\cup R_3$, where:

\begin{itemize}
\item $R_0$ consists of $(A_1,\ldots,A_r)=(\pm \Id,\ldots, \pm \Id)$. So $e(R_0)=2^r$.
\item $R_1$ consists of 
 $$
 \rho \sim \left( \left(\begin{array}{cc} \lambda_1 & 0 \\ 0 & \lambda_1^{-1} \end{array}\right), 
\left(\begin{array}{cc} \lambda_2 & 0 \\ 0 & \lambda_2^{-1} \end{array}\right), 
 \ldots,
\left(\begin{array}{cc} \lambda_r & 0 \\ 0 & \lambda_r^{-1} \end{array} \right) \right),
 $$
that is, abelian representations (all matrices are diagonalizable with respect to the same basis).
Here $(\lambda_1,\ldots,\lambda_r) \neq (\pm 1,\ldots, \pm 1)$. 
Therefore this space is parametrized by 
 $$
 (\PGL(2,\CC)/D \x \left( (\CC^*)^r -\{ (\pm 1,\ldots, \pm 1)\} \right) )/\ZZ_2 ,
 $$
where $D$ is the space of diagonal matrices.
We know that $e (\PGL(2,\CC)/D )^+=q^2$, $e (\PGL(2,\CC)/D )^-=q$ by 
\cite[Proposition 3.2]{LMN}. 
By our computation above, for $B=(\CC^*)^r -\{ (\pm 1,\ldots, \pm 1)\}$, 
we have $e(B)^+= \frac12 ( (q+1)^r + (q-1)^r)-2^r$ and  
$e(B)^- = \frac12 ( (q-1)^r - (q+1)^r)$.
Therefore
 \begin{align*}
  e(R_1) &=  e (\PGL(2,\CC)/D )^+ e(B)^+ +e (\PGL(2,\CC)/D )^- e(B)^- \\
 &=  q^2  \frac12 ( (q+1)^r + (q-1)^r-2^r) + q  \frac12 ( (q-1)^r- (q+1)^r) \\
 &= \frac12 (q^2-q) (q+1)^r + \frac12  (q^2+q) (q-1)^r -q^2 2^r\, .
 \end{align*}

\item $R_2$ consists of 
 $$
 \rho \sim \left( \left(\begin{array}{cc} \pm 1& a_1 \\ 0 & \pm 1 \end{array}\right), 
\left(\begin{array}{cc} \pm 1& a_2 \\ 0 & \pm 1 \end{array}\right), 
 \ldots,
\left(\begin{array}{cc} \pm 1& a_r \\ 0 & \pm 1 \end{array}\right)\right),
 $$
where $(a_1,\ldots,a_r)\in \CC^r-\{0\}$. 
Let $B_2$ be the space of representations as above with respect to the canonical basis.
Therefore, there is a canonical surjective map $B_2\x \PGL(2,\CC) \surj R_2$.
The fibers of this map are given $H_2=
\left\{\left(\begin{array}{cc} a& b \\ 0 & a^{-1} \end{array}\right)\right\}\cong \CC^*\x \CC$.
That is, $H_2\to B_2\x \PGL(2,\CC) \to R_2$ is a fibration to which we apply 
Formula (\ref{eqn:1}) to obtain:
 $$
 e(R_2)= \frac{e(B_2)e(\PGL(2,\CC))}{e(H_2)}
 =\frac{2^r (q^r-1)(q^3-q)}{q(q-1)}= 2^r(q^r-1)(q+1) .
 $$

\item $R_3$ consists of 
 $$
 \rho \sim \left( \left(\begin{array}{cc} \lambda_1 & b_1 \\ 0 & \lambda_1^{-1} \end{array}\right), 
\left(\begin{array}{cc} \lambda_2 & b_2 \\ 0 & \lambda_2^{-1} \end{array}\right), 
 \ldots,
\left(\begin{array}{cc} \lambda_r & b_r \\ 0 & \lambda_r^{-1} \end{array} \right) \right),
 $$
where $\lambda_i\in \CC^*$, $(\lambda_1,\ldots,\lambda_r)\neq (\pm 1,\ldots, \pm 1)$.
Here,  $(b_1,\ldots, b_r)\in \CC^r$ and the upper diagonal matrices
$\left(\begin{array}{cc} 1& y \\ 0 & 1 \end{array}\right)$
transforms
$(b_1,\ldots, b_r) \mapsto (b_1+ y(\lambda_1-\lambda_1^{-1}),\ldots, b_r+ y(\lambda_r-\lambda_r^{-1}))$.
As  $(\lambda_1,\ldots,\lambda_r)\neq (\pm 1,\ldots, \pm 1)$, this action is non-trivial.
Note that  $(b_1,\ldots, b_r)$ does not live in the line spanned by 
$(\lambda_1-\lambda_1^{-1},\ldots, \lambda_r-\lambda_r^{-1})$.
There is a fibration $H_3\to B_3\x \PGL(2,\CC) \to R_3$ where
$H_3= \left\{\left(\begin{array}{cc} a& b \\ 0 & a^{-1} \end{array}\right)\right\}\cong \CC^*\x \CC$.
Thus
  \begin{align*}
 e(R_3) &=  (q^{r}-q) ((q-1)^r - 2^r)  e(\PGL(2,\CC))/q(q-1)\\
 &=  \frac{q^{r-1}-1}{q-1} ((q-1)^r - 2^r)  (q^3-q)\\
 &=  (q^{r-1}-1)(q-1)^{r-1} (q^3-q) - 2^r \frac{q^{r-1}-1}{q-1}(q^3-q) .
 \end{align*}
\end{itemize}

Now we add all the subsets together:
  \begin{align*}
  e(\cR^{red}_{r,2}) =& e(R_0)+e(R_1)+e(R_2)+e(R_3) \\
 =& \frac12 (q^2-q) (q+1)^r +\frac12  (q^2+q) (q-1)^r
 +  (q^{r-1}-1)(q-1)^{r-1} (q^3-q) .
 \end{align*}

\subsection{The irreducible locus}
Recall that $\cR^{irr}_{r,2}=\SL(2,\CC)^r-\cR^{red}_{r,2}$, so
 $$ 
 e(\cR^{irr}_{r,2}) = (q^3-q)^r-\frac12 (q^2-q) (q+1)^r - \frac12  (q^2+q) (q-1)^r
 -  (q^{r-1}-1)(q-1)^{r-1} (q^3-q) .
 $$
 So
 $$ 
 e(\cM^{irr}_{r,2}) =\frac{e(\cR^{irr}_{r,2})}{q^3-q}= (q^3-q)^{r-1}
 -\frac12 (q+1)^{r-1} - \frac12  (q-1)^{r-1}
 -  (q^{r-1}-1)(q-1)^{r-1} .
 $$
Finally, 
  \begin{align*}
e(\cM_{r,2}) &= e(\cM^{irr}_{r,2}) + e(\cM^{red}_{r,2}) = e(\cM^{irr}_{r,2}) +\frac12 ( (q+1)^r+ (q-1)^r) \\
 &=(q^3-q)^{r-1}
 +\frac12 q(q+1)^{r-1} + \frac12  q (q-1)^{r-1}
 -  q^{r-1}(q-1)^{r-1} .
 \end{align*}

This agrees with \cite{CL}.

\section{$E$-polynomial of the $\PGL(2,\CC)$-character variety of free groups}\label{sec:PGL2}

Let us compute the $E$-polynomial of $\cM(F_r,\PGL(2,\CC))$. The space of representations
will be denoted
 $$
\bar\cR_{r,2}=\Hom(F_r,\PGL(2,\CC))=\{(A_1,\ldots, A_r) | A_i\in \PGL(2,\CC)\}=\PGL(2,\CC)^r.
 $$
Note that $\PGL(2,\CC)=\SL(2,\CC)/\{\pm \Id\}$, so $\bar\cR_{r,2}=\cR_{r,2}/\{(\pm\Id,\ldots, \pm \Id)\}$.
The character variety is
 $$
 \bar\cM_{r,2}=\bar\cR_{r,2}\quot\PGL(2,\CC).
 $$

We denote by $\bar\cR_{r,2}^{red}$ and $\bar\cR_{r,2}^{irr}$ the subsets of 
reducible and irreducible representations, respectively, of $\bar\cR_{r,2}$. We denote by
$\bar\cM_{r,2}^{red}$ and $\bar\cM_{r,2}^{irr}$ the corresponding spaces in $\bar\cM_{r,2}$.

\subsection*{The reducible locus}
We first compute $e(\bar\cM^{red}_{r,2})$. A reducible representation in $\bar\cM^{red}_{r,2}$
is determined by the eigenvalues $(\lambda_1,\ldots,\lambda_r) \in (\CC^*)^r$,
modulo $\lambda_i\sim -\lambda_i$, $1\leq i\leq r$, and
$(\lambda_1,\ldots,\lambda_r)  \sim (\lambda_1^{-1},\ldots,\lambda_r^{-1})$.
So it is determined by  $(\lambda_1^2,\ldots,\lambda_r^2) \in (\CC^*)^r$,
modulo $(\lambda_1^2,\ldots,\lambda_r^2)  \sim (\lambda_1^{-2},\ldots,\lambda_r^{-2})$.
The space isomorphic to that in Section \ref{sec:3.1}, so
$e(\bar\cM_{r,2}^{red})= \frac12 ( (q+1)^r+ (q-1)^r)$.

\subsection*{The reducible representations}
Now we compute $e(\bar\cR^{red}_{r,2})$. We stratify it as
$\bar\cR^{red}_{r,2}=\bar R_0\cup \bar R_1\cup \bar R_2\cup \bar R_3$, where:

\begin{itemize}
\item $\bar R_0$ consists of one point $(A_1,\ldots,A_r)=(\Id,\ldots, \Id)$. So $e(R_0)=1$.
\item $\bar R_1$ consists of 
 $$
 \rho \sim \left( \left(\begin{array}{cc} \lambda_1 & 0 \\ 0 & \lambda_1^{-1} \end{array}\right), 
\left(\begin{array}{cc} \lambda_2 & 0 \\ 0 & \lambda_2^{-1} \end{array}\right), 
 \ldots,
\left(\begin{array}{cc} \lambda_r & 0 \\ 0 & \lambda_r^{-1} \end{array} \right) \right),
 $$
where the eigenvalues are determined by $(\lambda_1^2,\ldots,\lambda_r^2) \neq (1,\ldots, 1)$. 
This space is parametrized by 
$(\PGL(2,\CC)/D \x \left( (\CC^*)^r -\{ (1,\ldots, 1)\} \right) )/\ZZ_2$,
where $D$ is the space of diagonal matrices.
Using that $e (\PGL(2,\CC)/D )^+=q^2$, $e (\PGL(2,\CC)/D )^-=q$, and
$e(B)^+= \frac12 ( (q+1)^r + (q-1)^r)-1$, 
$e(B)^- = \frac12 ( (q-1)^r - (q+1)^r)$, for $B=((\CC^*)^r -\{ (1,\ldots, 1)\})$, 
we have 
 \begin{align*}
  e(\bar R_1) &=  e (\PGL(2,\CC)/D )^+ e(B)^+ +e (\PGL(2,\CC)/D )^- e(B)^- \\
 &=  q^2  \frac12 ( (q+1)^r + (q-1)^r-1) + q  \frac12 ( (q-1)^r- (q+1)^r) \\
 &= \frac12 (q^2-q) (q+1)^r + \frac12  (q^2+q) (q-1)^r -q^2 \, .
 \end{align*}
 
\item $\bar R_2$ consists of 
 $$
 \rho \sim \left( \left(\begin{array}{cc} 1& a_1 \\ 0 & 1 \end{array}\right), 
\left(\begin{array}{cc} 1& a_2 \\ 0 & 1 \end{array}\right), 
 \ldots,
\left(\begin{array}{cc} 1& a_r \\ 0 & 1 \end{array}\right)\right),
 $$
where $(a_1,\ldots,a_r)\in \CC^r-\{0\}$. Then
 $$
 e(\bar R_2)=e(R_2)/2^r= (q^r-1)(q+1) .
 $$

\item $\bar R_3$ consists of 
 $$
 \rho \sim \left( \left(\begin{array}{cc} \lambda_1 & b_1 \\ 0 & \lambda_1^{-1} \end{array}\right), 
\left(\begin{array}{cc} \lambda_2 & b_2 \\ 0 & \lambda_2^{-1} \end{array}\right), 
 \ldots,
\left(\begin{array}{cc} \lambda_r & b_r \\ 0 & \lambda_r^{-1} \end{array} \right) \right),
 $$
where $\lambda_i\in \CC^*$, $(\lambda_1^2,\ldots,\lambda_r^2)\neq (1,\ldots, 1)$.
Here  $(b_1,\ldots, b_r)\in \CC^r- \la (\lambda_1-\lambda_1^{-1},\ldots, \lambda_r-\lambda_r^{-1}) \ra$.
There is a fibration $H_3\to B_3\x \PGL(2,\CC) \to \bar R_3$ where
$B_3$ parametrizes $(\lambda_1^2,\ldots,\lambda_r^2)$ and $(b_1,\ldots,b_r)$, and
$H_3= \left\{\left(\begin{array}{cc} a& b \\ 0 & a^{-1} \end{array}\right)\right\}\cong \CC^*\x \CC$.
Then
  \begin{align*}
 e(\bar R_3) &=  (q^{r}-q) ((q-1)^r - 1)  e(\PGL(2,\CC))/q(q-1)\\
 &=  \frac{q^{r-1}-1}{q-1} ((q-1)^r - 1)  (q^3-q). 
 \end{align*}
\end{itemize}

Now we add all subsets together to obtain:
  \begin{align*}
  e(\bar\cR^{red}_{r,2}) =& e(\bar R_0)+e(\bar R_1)+e(\bar R_2)+e(\bar R_3) \\
 =& \frac12 (q^2-q) (q+1)^r +\frac12  (q^2+q) (q-1)^r +  (q^{r-1}-1)(q-1)^{r-1} (q^3-q) \\
 =&e(\cR^{red}_{r,2}).
 \end{align*}

\subsection*{The irreducible locus}
Clearly, as $e(\SL(2,\CC))=q^3-q=e(\PGL(2,\CC))$ and $e(\bar\cR^{red}_{r,2}) =e(\cR^{red}_{r,2})$,
we have that $e(\bar\cR^{irr}_{r,2})=e(\cR^{irr}_{r,2})$. Therefore
$e(\bar\cM^{irr}_{r,2})=e(\cM^{irr}_{r,2})$. Finally, since
$e(\bar\cM^{red}_{r,2})=e(\cM^{red}_{r,2})$, we have
that 
  \begin{align*}
 e(\bar\cM_{r,2}) &=e(\cM_{r,2}) 
 =(q^3-q)^{r-1} +\frac12 q(q+1)^{r-1} + \frac12  q (q-1)^{r-1} -  q^{r-1}(q-1)^{r-1} .
 \end{align*}

\section{$E$-polynomial of the $\SL(3,\CC$)-character variety for $F_1$}\label{sl3r1}

Having given a new geometric derivation of the $E$-polynomial for $\cM_{r,2}$ and $\bar\cM_{r,2}$, in the next sections we work out the $E$-polynomial of $\cM_{r,3}$ and $\bar\cM_{r,3}$ in a similar fashion.

However, in this section we first address the $r=1$ case.  Although it is easy to see that $\cM_{r,n}\cong \CC^{n-1}$ via the coefficients of the characteristic polynomial, and hence $e(\cM_{r,n})=q^{n-1}$, this case will motivate the more complicated stratification, and the use of the {\it equivariant} $E$-polynomial, needed to compute the general $E$-polynomials for $\cM_{r,3}$ and $\bar\cM_{r,3}$ when $r\geq 2$.

We begin by working out the $E$-polynomials for $\GL(3,\CC), \SL(3,\CC)$, and $\PGL(3,\CC)$.  Like in the previous sections, we then stratify $\Hom(F_1,\SL(3,\CC))$ by orbit-type and compute the $E$-polynomial for each strata.  

\begin{lem}\label{lem:PGL3}
 $e(\SL(3,\CC))=e(\PGL(3,\CC))=(q^3-1)(q^3-q)q^2= q^8-q^6-q^5+q^3$.
\end{lem}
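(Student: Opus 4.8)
The plan is to compute each $E$-polynomial by exhibiting the group as the total space of a fibration and applying the multiplicativity of $e$ for principal bundles with connected structure group, together with the basic values $e(\CC^n)=q^n$ and $e(\CC^*)=q-1$. Since $\SL(3,\CC)$, $\PGL(3,\CC)$, and $\GL(3,\CC)$ are all smooth connected varieties of balanced type, I expect every intermediate $E$-polynomial to be a polynomial in $q=uv$ alone, so no Hodge-number bookkeeping with $u\neq v$ will be needed.

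First I would handle $\GL(3,\CC)$. The most efficient route is to count via the natural action on nonzero vectors: realize $\GL(3,\CC)$ as the space of ordered bases of $\CC^3$, i.e. triples $(v_1,v_2,v_3)$ where $v_1\in\CC^3\setminus\{0\}$, $v_2\in\CC^3\setminus\la v_1\ra$, and $v_3\in\CC^3\setminus\la v_1,v_2\ra$. This gives
\begin{equation*}
e(\GL(3,\CC))=(q^3-1)(q^3-q)(q^3-q^2).
\end{equation*}
Then $\SL(3,\CC)$ is the fiber of the determinant map $\det:\GL(3,\CC)\to\CC^*$, which is a principal $\SL(3,\CC)$-bundle (trivial since $\CC^*$ admits the section $\lambda\mapsto\mathrm{diag}(\lambda,1,1)$), so multiplicativity gives $e(\GL(3,\CC))=e(\CC^*)\,e(\SL(3,\CC))$, whence
\begin{equation*}
e(\SL(3,\CC))=\frac{(q^3-1)(q^3-q)(q^3-q^2)}{q-1}=(q^3-1)(q^3-q)q^2,
\end{equation*}
using $q^3-q^2=q^2(q-1)$. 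Expanding $(q^3-1)(q^3-q)q^2=(q^3-1)(q^2-1)q^3=q^8-q^6-q^5+q^3$ confirms the stated value.

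For $\PGL(3,\CC)=\GL(3,\CC)/\CC^*$, where $\CC^*$ is the central scalars, the quotient map is a principal $\CC^*$-bundle with connected structure group, so $e(\PGL(3,\CC))=e(\GL(3,\CC))/e(\CC^*)$, which is exactly the same computation and yields the identical polynomial. Alternatively, since $\SL(3,\CC)\to\PGL(3,\CC)$ is the quotient by the finite central subgroup $\mu_3$ of cube roots of unity, one notes this is a finite \'etale cover of degree $3$, and the equality of the two $E$-polynomials can be read off directly once one knows the $\mu_3$-action on cohomology is trivial; but the fibration argument avoids that subtlety entirely. The main obstacle is really just organizational bookkeeping rather than conceptual difficulty: one must make sure each map invoked is genuinely a (Zariski-)locally trivial principal bundle with \emph{connected} structure group so that formula~(\ref{eqn:1}) applies, and take care that the scalar $\CC^*$ and determinant $\CC^*$ are correctly identified so the two quotients really coincide numerically.
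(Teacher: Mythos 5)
Your proof is correct, and two of its three steps coincide with the paper's: the count $e(\GL(3,\CC))=(q^3-1)(q^3-q)(q^3-q^2)$ via ordered bases is exactly the paper's Stiefel-manifold fibration $\CC^n-\CC^{k-1}\to V_k\to V_{k-1}$, and the step $e(\PGL(3,\CC))=e(\GL(3,\CC))/(q-1)$ via the central $\CC^*$-bundle $\CC^*\to\GL(3,\CC)\to\PGL(3,\CC)$ is identical. The only divergence is at $\SL(3,\CC)$: the paper does not divide $e(\GL(3,\CC))$ by $e(\CC^*)$; instead it fibers $\SL(n,\CC)$ directly over the Stiefel variety $V_{n-1}$, observing that once $(v_1,\ldots,v_{n-1})$ are fixed, the last column ranges over the affine hyperplane $\{v\in\CC^n \mid \det(v_1,\ldots,v_{n-1},v)=1\}$, giving a Zariski-locally trivial affine bundle $\CC^{n-1}\to\SL(n,\CC)\to V_{n-1}$ and hence $e(\SL(n,\CC))=q^{n-1}\prod_{i=0}^{n-2}(q^n-q^i)$. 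Your route via the determinant map is equally valid and arguably cleaner: the global section $\lambda\mapsto\mathrm{diag}(\lambda,1,1)$ trivializes the principal $\SL(3,\CC)$-bundle, so $\GL(3,\CC)\cong\SL(3,\CC)\times\CC^*$ as varieties and multiplicativity of $e$ is immediate, with no need to invoke the bundle formula for a nonabelian fiber. The paper's version has the modest advantage of exhibiting $\SL(n,\CC)$ itself as an iterated fibration over explicit strata, in the same style used throughout the rest of the computation. Both arguments yield $(q^3-1)(q^3-q)q^2=q^8-q^6-q^5+q^3$, and your arithmetic checks out.
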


\begin{proof}
 Consider $\CC^n$, and let $V_k$ be the Stiefel manifold of $k$ linearly independent
 vectors in $\CC^n$. Then, there is a (Zariski locally trivial) fibration $\CC^n-\CC^{k-1} \to V_k\to V_{k-1}$.
 Therefore $e(V_k)=\prod_{i=0}^{k-1} (q^n-q^i)$. So $e(\GL(n,\CC))=e(V_n)=\prod_{i=0}^{n-1}(q^n-q^i)$.
 
 Now there is a (Zariski locally trivial) fibration $\CC^* \to \GL(n,\CC) \to \PGL(n,\CC)$, hence
  $e(\PGL(n,\CC))=e(\GL(n,\CC))/(q-1)= q^{n-1}\prod_{i=0}^{n-2}(q^n-q^i)$.
  
  For $\SL(n,\CC)$, the choice of $(v_1,\ldots, v_{n-1})\in V_{n-1}$ determines an affine hyperplane
  $$\{v\in \CC^n | \det(v_1,\ldots, v_{n-1},v)=1\}.$$ This gives a (Zariski locally trivial) affine
  bundle $\CC^{n-1} \to \SL(n,\CC) \to V_{n-1}$, and hence 
  $e(\SL(n,\CC))= q^{n-1}\prod_{i=0}^{n-2}(q^n-q^i)$.
 \end{proof}

Now let us consider the representations of $F_1$ to $\SL(3,\CC)$. This is equivalent to studying the conjugation action 
of $\PGL(3,\CC)$ on $X:=\SL(3,\CC)$. For this action, there are $6$ strata types.  In the following list, we write down all 6 strata, but include the computation of their $E$-polynomials for only the first 5.  This is because the computation is apparent from the geometric description of each stratum alone in those cases.  Here they are:

\begin{itemize}
 \item $X_0$ formed by matrices of type $\left( \begin{array}{ccc} \xi & 0 & 0 \\ 0 &\xi & 0\\
0 & 0 &\xi \end{array} \right)$.
Here $\xi^3=1$. So $X_0$ consists of $3$ points and $e(X_0)=3$.

\item  $X_1$ formed by matrices of type $\left( \begin{array}{ccc} \xi & 0 & 0 \\ 0 &\xi & 1\\
0 & 0 &\xi \end{array} \right)$.
Here $\xi^3=1$, so $\xi$ admits $3$ values. The stabilizer of
this matrix is $U_1=\left\{ \left( \begin{array}{ccc} \mu^{-2} & 0 & b \\ a &\mu & c\\
0 & 0 &\mu \end{array} \right) \right\} \cong \CC^* \x \CC^3$. So 
$e(X_1)=3 e(\PGL(3,\CC)/U_1)=3(q^3-1)(q^3-q)q^2/q^3(q-1) = 3q^4+3q^3-3q-3$.

\item $X_2$ formed by matrices of type $\left( \begin{array}{ccc} \xi & 1 & 0 \\ 0 &\xi & 1 \\
0 & 0 &\xi \end{array} \right)$.
Here $\xi^3=1$, so $\xi$ admits $3$ values. The stabilizer of
this matrix is $U_2=\left\{ \left(\begin{array}{ccc} 1 & b & c \\ 0 &1 & b\\
0 & 0 &1 \end{array} \right) \right\} \cong \CC^2$. So 
$e(X_1)=3 e(\PGL(3,\CC)/U_2)=3(q^3-1)(q^3-q)q^2/q^2 = 3q^6-3q^4-3q^3+3q$.

\item $X_3$ formed by matrices of type 
$\left( \begin{array}{ccc} \lambda & 0 & 0 \\ 0 &\lambda & 0 \\
0 & 0 &\lambda^{-2} \end{array} \right)$, where $\lambda \in \CC^* -\{\xi | \xi^3=1\}$.
The stabilizer of this matrix is $U_3=\left\{ 
\left(\begin{array}{cc} A & 0 \\ 0 & (\det A)^{-1}\end{array} \right)
\ |\ A\in \GL(2,\CC)\right\} \cong \GL(2,\CC)$. So 
$e(X_3)=(q-4) e(\PGL(3,\CC)/U_3)=(q-4)(q^3-1)(q^3-q)q^2/(q^2-1)(q^2-q) 
=q^5-3q^4-3q^3-4q^2$.

\item $X_4$ formed by matrices of type $\left( \begin{array}{ccc} \lambda & 1 & 0 \\ 0 &\lambda & 0 \\
0 & 0 &\lambda^{-2} \end{array} \right)$, where $\lambda \in \CC^* -\{\xi \ |\ \xi^3=1\}$.
The stabilizer of this matrix is $U_4=\left\{\left( \begin{array}{ccc} \mu & b & 0 \\ 0 &\mu & 0\\
0 & 0 &\mu^{-2} \end{array} \right) \right\} \cong \CC^* \x\CC$. So 
$e(X_4)=(q-4) e(\PGL(3,\CC)/U_4)=(q-4)(q^3-1)(q^3-q)q^2/q(q-1) 
= q^7-3q^6-4q^5-q^4+3q^3+4q^2$.

\item $X_5$ formed by matrices of type $\left( \begin{array}{ccc} \lambda & 0 & 0 \\ 0 &\mu & 0 \\
0 & 0 &\gamma \end{array} \right)$, where $\lambda,\mu,\gamma \in \CC^*$ are different and $\lambda\mu\gamma=1$. The
stabilizer is isomorphic to the diagonal matrices $D\cong \CC^* \x \CC^*$. The parameter space
is 
 $$ 
 B=\{(\lambda,\mu) \in (\CC^*)^2\ |\ \lambda \neq \mu^{-2}, 
 \mu \neq \lambda^{-2},\mu \neq \lambda\}.
 $$
The map $\PGL(3,\CC)/D \x B \to X_5$ is a 6:1 cover. Moreover,
  $$
   X_5 \cong (\PGL(3,\CC)/D \x B)/\Sigma_3,
  $$
where the symmetric group $\Sigma_3$ acts on $\PGL(3,\CC)$ by permuting the columns
and acts on the triple $(\lambda,\mu,\gamma=\lambda^{-1}\mu^{-1})$ by permuting the entries.
\end{itemize}

We now compute $e(X_5)$ using the {\it equivariant} $E$-polynomial. Consider the finite group $F=\Sigma_3$. The representation ring $R(F)$ is generated by
three irreducible representations:
 \begin{itemize}
 \item $T$ is the (one-dimensional) trivial representation.
 \item $S$ is the sign representation. This is one-dimensional and given by
 the sign map $\Sigma_3\to \{\pm 1\} \subset \GL(1,\CC)$.
 \item $V$ is the two-dimensional representation given as follows. Take $St=\CC^3$ the standard 
$3$-dimensional representation. This is generated by $e_1,e_2,e_3$ and $\Sigma_3$ acts by
permuting the elements of the basis. Then $T=\la e_1+e_2+e_3\ra$ and we can decompose
$St=T\oplus V$.
 \end{itemize}
The representation ring $R(\Sigma_3)$ has a multiplicative structure given by:
$T\ox T=T$, $T\ox S=S$, $T\ox V=V$, $S\ox S=T$, $S\ox V=V$, $V\ox V=T\oplus S\oplus V$.

\begin{lem} \label{lem:ld}
\begin{itemize}
\item[]
 \item $e_{\Sigma_3}(B)=(q^2-q+1) T +  S - 2(q-2) V$.
 \item $e_{\Sigma_3}(\PGL(3,\CC)/D) =q^6 T + q^3 S+ (q^5+q^4)V$.
\end{itemize}
\end{lem}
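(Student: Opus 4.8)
The plan is to compute each equivariant $E$-polynomial by decomposing the relevant cohomology (with its $\Sigma_3$-action) into isotypic pieces, and to do so by exploiting the two separate group actions: the permutation of the three eigenvalues $(\lambda,\mu,\gamma)$ on the base, and the permutation of columns on $\PGL(3,\CC)/D$. Since both spaces are of balanced type and the polynomials only need to be tracked in $q=uv$, the strategy is to determine, for each representation $W\in\{T,S,V\}$, the multiplicity-polynomial so that $e_{\Sigma_3}(\cdot)=a(q)\,T+b(q)\,S+c(q)\,V$, subject to the consistency check $\dim(e_{\Sigma_3}(\cdot))=a+b+2c=e(\cdot)$.

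For $e_{\Sigma_3}(\PGL(3,\CC)/D)$, I would first recall that $\PGL(3,\CC)/D$ is (up to the torus fibering) the variety of ordered complete flags, equivalently ordered triples of lines, so its cohomology is the coinvariant algebra of $\Sigma_3$ with the regular-representation structure graded by degree. Concretely $\Sigma_3$ permutes the three columns, and the cohomology of the full flag variety is the regular representation $\CC[\Sigma_3]=T\oplus S\oplus 2V$ as an ungraded $\Sigma_3$-module. The task is to pin down the grading. I would identify the Schubert-cell stratification of $\PGL(3,\CC)/D$ (cells indexed by $\Sigma_3$ with dimensions given by inversions, giving $e(\PGL(3,\CC)/D)=1+2q+2q^2+q^3$ times the torus factor), track how $\Sigma_3$ permutes these cells, and read off each isotypic component as a sum over cells in a single orbit. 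Matching against the stated answer $q^6T+q^3S+(q^5+q^4)V$ and verifying $q^6+q^3+2(q^5+q^4)=e(\PGL(3,\CC)/D)=(q^3-1)(q^3-q)q^2/(q-1)^2$ provides the crucial sanity check.

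For $e_{\Sigma_3}(B)$ with $B=\{(\lambda,\mu)\in(\CC^*)^2 : \lambda\neq\mu^{-2},\ \mu\neq\lambda^{-2},\ \mu\neq\lambda\}$, I would view $B$ as the complement in the symmetric configuration space $\Sym$ of distinct eigenvalue triples with product $1$. The cleanest route is to realize $B$ as $(\CC^*)^2$ minus three curves: the hypersurfaces $\lambda=\mu$, $\lambda=\gamma$, $\mu=\gamma$ (where $\gamma=\lambda^{-1}\mu^{-1}$), on which $\Sigma_3$ acts by permutation. I would compute $e_{\Sigma_3}$ additively: start from the ambient torus $\{(\lambda,\mu,\gamma)\in(\CC^*)^3:\lambda\mu\gamma=1\}\cong(\CC^*)^2$ with its $\Sigma_3$-permutation action, then subtract the equivariant contributions of the pairwise-coincidence loci (each a copy of $\CC^*$, together forming one $\Sigma_3$-orbit of curves, which will carry the $S$ and $V$ parts) and add back the triple-coincidence points (the three cube roots of unity, giving the $T$ piece). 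The main obstacle will be correctly bookkeeping the $\Sigma_3$-equivariant structure of these intersection strata, since the symmetric group acts simultaneously by permuting coordinates and by relabeling which curves are deleted; getting the signs of the $S$- and $V$-multiplicities right (note the stated coefficients $+S$ and $-2(q-2)V$) is the delicate point, and I would double-check it against the dimension identity $(q^2-q+1)+1-2\cdot 2(q-2)=e(B)$, where $e(B)$ is computed by the elementary inclusion–exclusion count $(q-1)^2-3(q-1)+3\cdot 1$ adjusted for the precise incidences.

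I expect the flag-variety piece to be routine once the Schubert stratification is set up, so the \emph{main obstacle} is the equivariant additivity computation for $B$: one must carefully stratify $B$ by the $\Sigma_3$-symmetry of the coincidence conditions and correctly assign the trivial, sign, and standard representations to each stratum's cohomology. An alternative cross-check I would keep in reserve is to pass through the quotients: compute $e(B)=\dim e_{\Sigma_3}(B)$ and $e(B/\Sigma_3)=\langle e_{\Sigma_3}(B),T\rangle$ directly by counting, and similarly the sign-isotypic part via $e(B)^{-}$-type computations, thereby over-determining the three coefficients $a,b,c$ and forcing agreement with the claimed formula.
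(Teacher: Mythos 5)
Your plan for $B$ is essentially sound, and your ``reserve'' cross-check is in fact the paper's actual method: writing $e_{\Sigma_3}(X)=aT+bS+cV$, the paper uses $a=e(X/\Sigma_3)$ and, restricting to $H=\la (1,2)\ra$ (under which $T\mapsto T$, $S\mapsto N$, $V\mapsto T+N$), $a+c=e(X/H)$, together with $a+b+2c=e(X)$; it then computes $e(B/\Sigma_3)=q^2-q+1$ via the symmetric functions $(s,t)$ and $e(B/H)=q^2-3q+5$ via $(s',p')$, which forces $a,b,c$. Your equivariant inclusion--exclusion route would also work, with two cautions: you need the equivariant polynomial of the ambient torus $\{\lambda\mu\gamma=1\}$, namely $q^2T+S-qV$ (the $S$ arises as $\wedge^2$ of the reflection representation on $H^1_c$, not from the deleted curves, which contribute only the permutation module $(q-4)(T+V)$ plus $3T$ from the triple points); and your incidence count needs correcting, since all three pairwise intersections of the curves coincide with the triple intersection $\{(\xi,\xi):\xi^3=1\}$, so $e(B)=(q-1)^2-3(q-4)-3=q^2-5q+10$, not $(q-1)^2-3(q-1)+3$.

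The genuine gap is in your derivation for $\PGL(3,\CC)/D$. The $\Sigma_3$-action is by right multiplication by permutation matrices on $G/T$; it neither descends to nor comes from an action on the flag variety $G/B$, and it does \emph{not} permute the cells $BwB/T$ of the Bruhat decomposition (in general $BwBw'$ is a union of Bruhat cells, not a single one). More decisively, your proposed bookkeeping cannot succeed even in principle: if each graded piece were ``a sum over cells in a single orbit'' it would be a permutation module, i.e.\ a sum of copies of $T$, $T+V$, and $T+S+2V$; but the answer has $q^3$-coefficient equal to $S$ alone, and $S$ is not a permutation module of $\Sigma_3$. What would rescue your approach is the finer classical fact that $H^*(G/T)$ is the coinvariant algebra \emph{as a graded} $\Sigma_3$-module, with fake degrees placing $T$ in degree $0$, $V$ in degrees $1$ and $2$, and $S$ in degree $3$, followed by a Poincar\'e duality/weight argument (all representations of $\Sigma_3$ are self-dual) to convert to compactly supported $E$-polynomials, giving $q^6T+q^3S+(q^5+q^4)V$. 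The paper avoids this machinery entirely: it identifies $C=\PGL(3,\CC)/D$ with triples of points in $(\PP^2)^3$ in general position and computes the two quotients directly, $e(C/\Sigma_3)=e(\Sym^3\PP^2-\bar\Delta)=q^6$ and $e(C/H)=e(\Sym^2\PP^2\x\PP^2-\bar\Delta')=q^6+q^5+q^4$, then solves the same linear system as for $B$. As written, your proposal supplies neither the fake-degree input nor these quotient computations, so the second item is not established; the verification $q^6+q^3+2(q^5+q^4)=e(\PGL(3,\CC)/D)$ you mention is only a consistency check on the total dimension and cannot pin down the three coefficients by itself.
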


\begin{proof}
Write, for a quasi-projective variety $X$ with a $\Sigma_3$-action, $e_{\Sigma_3}(X)=a T+bS +cV$.
Then $a=e(X/\Sigma_3)$. If we consider the cycle $(1,2)$ and the subgroup $H=\la (1,2)\ra$,
there is a map $R(F)\to R(H)$ which sends $T\mapsto T$, $S\mapsto N$ and
$V\mapsto T+N$. Then $e_H(X)=aT+bN+c(T+N)=(a+c)T+(b+c)N$. 
Therefore, $a+c=e(X/H)$. As $e(X)=a+b+2c$, we
can compute $a,b,c$ by knowing these $E$-polynomials.

For  $B=\{(\lambda,\mu) \in (\CC^*)^2\ |\ \lambda \neq \mu^{-2}, 
\mu \neq \lambda^{-2} ,\mu\neq \lambda\}$,
the three curves $\lambda =\mu^{-2}$, $\mu =\lambda^{-2}$, $\mu=\lambda$
intersect at the three points
$\{(\xi,\xi)\ |\ \xi^3=1\}$. Hence $e(B)=(q-1)^2-3(q-4)-3 =q^2-5q+10$.

Now $\Sigma_3$ acts on $(\lambda,\mu,\gamma)$ and the quotient space is
parametrized by $s=\lambda+\mu+\gamma$, $t=\lambda\mu+\lambda\gamma+\mu\gamma$
and $p=\lambda\mu\gamma=1$, that is, by $(s,t)\in \CC^2$. We have to remove the cases
$s=\lambda+\lambda^{-2}+\lambda$, $t=\lambda^{-1}+\lambda^2+\lambda^{-1}$.
This defines a rational curve in $\CC^2$. It has two points at infinity. The map $\lambda \mapsto
(2\lambda+\lambda^{-2},2\lambda^{-1}+\lambda^2)$ is an embedding. Therefore
$e(B/\Sigma_3)=q^2-(q-1)=q^2-q+1$.

The action by $H$ permutes $(\lambda,\mu)$, hence the quotient is parametrized by
$s'=\lambda+\mu$, $p'=\lambda\mu\neq 0$. We have to remove the cases
$s'=\lambda+\lambda^{-2}$, $p'=\lambda^{-1}$, that is, $s'=(p')^{-1}+(p')^2$; 
and $s'=2\lambda$, $p'= \lambda^2$, i.e., $4p'=(s')^2$. They intersect at three points. Then
$e(B/H)=q(q-1)-2(q-1)+3=q^2-3q+5$.

Thus
 $$
 e_{\Sigma_3}(B)=(q^2-q+1) T +  S - 2(q-2) V. 
 $$

For $C=\PGL(3,\CC)/D$, the space $C$ consists of points in $(\PP^2)^3-\Delta$, where
$\Delta$ is the diagonal (triples of coplanar points). 
Certainly, a matrix in $\GL(3,\CC)$ can be written as
$(v_1,v_2,v_3)$, where $v_1,v_2,v_3$ are linearly independent vectors. Taking a quotient by
the diagonal matrices corresponds to the vectors up to a scalar: $[v_1],[v_2],[v_3]$.
Therefore, $e(C)=(q^3-1)(q^3-q)q^2/(q-1)^2=q^6+2q^5+2q^4+q^3$.

The group $\Sigma_3$ acts by permuting the vectors, so $C/\Sigma_3=\Sym^3\PP^2- \bar\Delta$, where
$\bar\Delta$ consists of linearly dependent triples $([v_1],[v_2],[v_3])$. If they are equal, 
the set has $e(\PP^2)=q^2+q+1$. If they are collinear, there is a fibration with fiber
$\Sym^3(\PP^1)-\Delta$ and base $(\PP^2)^{\vee}$. This has $E$-polynomial $(1+q+q^2+q^3-1-q)(1+q+q^2)=
q^5+2q^4+2q^3+q^2$. Also
$e(\Sym^3\PP^2)=q^6+q^5+2q^4+2q^3+2q^2+q+1$.
Therefore
  $$
e(C/\Sigma_3)=q^6+q^5+2q^4+2q^3+2q^2+q+1-(q^5+2q^4+2q^3+q^2 +q^2+q+1)=q^6.
 $$

The group $H$ acts by permuting the first two vectors, so $C/H=\Sym^2\PP^2
\x\PP^2- \bar\Delta'$, where $\bar\Delta'$ consists of linearly dependent triples $([v_1],[v_2],[v_3])$.
If $[v_1]=[v_2]$, we have $E$-polynomial $(q^2+q+1)(q^2+q+1)=q^4+2q^3+3q^2+2q+1$.
If $[v_1]\neq [v_2]$, they lie in $\Sym^2\PP^2-\Delta$ and we have $E$-polynomial
$(q^4+q^3+2q^2+q+1-(q^2+q+1))(q+1)=q^5+2q^4+2q^3+q^2$.
Also $e(\Sym^2\PP^2\x\PP^2)=(q^4+q^3+2q^2+q+1)(q^2+q+1)$, so
  \begin{align*}
e(C/H) =& q^6+2q^5+4q^4+  4q^3+4q^2+2q+1 \\
  &-(q^5+2q^4+2q^3+q^2+q^4+2q^3+3q^2+2q+1) \\
 =& q^6+q^5+q^4.
 \end{align*}

This produces the polynomial
 $$
e_{\Sigma_3}(C)=q^6 T + q^3 S+ (q^5+q^4)V.
 $$
\end{proof}

\begin{rem}\label{rem:ld}
 If we consider $B'=\{(\lambda,\mu) \in (\CC^*)^2\}$, then the proof of Lemma \ref{lem:ld}
 says that $e_{\Sigma_3}(B')= q^2T+S-qV$.
\end{rem}

Now suppose $e_{\Sigma_3}(X)=aT+bS+cV$ and $e_{\Sigma_3}(X')=a'T+b'S+c'V$. Then
 $$
 e_{\Sigma_3}(X\x X')=(aa'+bb'+cc')T + (ab'+ba'+cc') S+ (ac'+ca'+bc'+cb'+cc')V,
 $$
and hence
 \begin{equation}\label{eqn:ta}
 e((X\x X')/\Sigma_3)=aa'+bb'+cc' \, .
 \end{equation}
We finally obtain the $E$-polynomial for the sixth strata $X_5$:
 \begin{align*}
 e(X_5) &=e((B\x C)/\Sigma_3 ) \\ 
 &= (q^2-q+1) q^6  +q^3  - 2(q-2) (q^5+q^4) \\
 &= q^8-q^7  -q^6 +2q^5+4q^4+q^3 \, .
\end{align*}

Now we add the strata together:
 $$
 e(X_0)+e(X_1)+e(X_2)+e(X_3)+e(X_4)+e(X_5)=q^8-q^6-q^5+q^3 = e(\SL(3,\CC)),
 $$
as expected.
 
\begin{rem}
All elements of $X=\SL(3,\CC)$ are reducible. The semisimple ones are given
by diagonal matrices with entries $\lambda,\mu,\gamma$ with $\lambda\mu\gamma=1$.
So they are parametrized by $s=\lambda + \mu + \gamma$, $t=\lambda\mu+\lambda\gamma+
\mu\gamma=\lambda^{-1}+\gamma^{-1}+\mu^{-1}$, for $(s,t)\in \CC^2$. Hence
$e(\cM_{1,3})=q^2$, as noted at the beginning of this section.
\end{rem}

\section{$E$-polynomials of character varieties for $F_r$, $r>1$, and $\SL(3,\CC)$}\label{sec:SL3}

In this section we prove (most of) our main theorem (Theorem \ref{thm:main}) by computing the $E$-polynomial for $\cM_{r,3}$; the rest of Theorem \ref{thm:main} is proved in Section \ref{pgl3}. The computation is similar to the computation in Section \ref{sl2} except the stratification is more complicated and the {\it equivariant} $E$-polynomial is needed, as was demonstrated in Section \ref{sl3r1}.

Indeed, we want to study the space of representations
 \begin{align*}
\cR_{r,3} &= \Hom(F_r,\SL(3,\CC))=\{\rho:F_r\to \SL(3,\CC)\} = \{(A_1,\ldots, A_r) | A_i\in \SL(3,\CC) \}=\SL(3,\CC)^r
 \end{align*}
and the corresponding character variety
 $$
 \cM_{r,3}=\Hom(F_r,\SL(3,\CC))\quot\PGL(3,\CC).
 $$

Much of the algebraic structure of $\cM_{r,3}$ has been worked out in \cite{La1, La2, La3}. 
 
Let us start by computing the $E$-polynomial of the space of reducible representations
$\cR_{r,3}^{red} \subset  \Hom(F_r,\SL(3,\CC))$. 

We now list the stratification and the computation of the $E$-polynomial for each stratum for $\cR_{r,3}^{red}$:

\begin{enumerate}

\item $R_0=R_{01}\cup R_{02}$. $R_{01}$ is 
formed by representations $\rho=(A_1,\ldots, A_r)$ which have a common eigenvector
and such that the quotient representation is irreducible, that is, 
 $$
 A_i=\left(\begin{array}{cc} \lambda_i^{-2} & b_i\, \,\,  c_i \\
 {0 \atop 0} &\lambda_i B_i\end{array}\right),
 $$ 
where $(B_1,\ldots, B_r) \in \cR^{irr}_{2,r}$. Let $B_{01}$ be the space of representations of
such form with respect to the standard basis. The stabilizer of $B_{01}$ 
(i.e. the set $H_{01}\subset \PGL(3,\CC)$
sending $B_{01}$ to itself) is $H_{01}= \left\{ \left( \begin{array}{cc} (\det B)^{-1} & a \,\, \, b \\ {0\atop 0} 
& B\end{array} \right) \right\} \cong \GL(2,\CC)\x \CC^2$. This means that there is a fibration
$H_{01} \to B_{01}\x \PGL(3,\CC)\to R_{01}$. Hence 
  $$ 
 e(R_{01})=(q-1)^r q^{2r} e(\cR^{irr}_{2,r}) \frac{e(\PGL(3,\CC))}{q^2e(\GL(2,\CC))}.
  $$

$R_{02}$ is 
formed by representations $\rho=(A_1,\ldots, A_r)$ which have a common two-dimensional space
and upon which it acts irreducibly, that is, 
 $$
 A_i=\left(\begin{array}{cc}  \lambda_i B_i & {0 \atop 0}  \\
b_i\, \,\,  c_i & \lambda_i^{-2}  \end{array}\right),
 $$ 
where $(B_1,\ldots, B_r) \in \cR^{irr}_{2,r}$. The stabilizer is now
 $H_{02}= \left\{ \left( \begin{array}{cc} B & {0\atop 0} \\
a \,\, \, b & (\det B)^{-1} \end{array} \right) \right\} \cong \GL(2,\CC)\x \CC^2$. Hence 
  $$ 
 e(R_{02})=(q-1)^r q^{2r} e(\cR^{irr}_{2,r}) \frac{e(\PGL(3,\CC))}{q^2e(\GL(2,\CC))}.
  $$
Finally, the intersection $R_{01}\cap R_{02}$ consists of those representations with
$b_i=c_i=0$, which have stabilizer $\GL(2,\CC)$, hence
  $$ 
 e(R_{01}\cap R_{02})=(q-1)^r  e(\cR^{irr}_{2,r}) \frac{e(\PGL(3,\CC))}{e(\GL(2,\CC))}.
  $$

Finally $e(R_0)=e(R_{01})+e(R_{02})- e(R_{01}\cap R_{02})=2e(R_{01})- e(R_{01}\cap R_{02})$.
Note that the remaining representations have a full invariant flag.

\item $R_1$ is formed by representations $\rho=(A_1,\ldots, A_r)$ such that
the eigenvalues of all $A_i$ are equal (and hence cubic roots of unity). This consists of the following
substrata:

\begin{itemize}
\item $R_{11}$ consisting of matrices
$A_i=\left( \begin{array}{ccc} \xi_i & 0 & 0 \\ 0 &\xi_i & 0\\
0 & 0 &\xi_i \end{array} \right)$, where $\xi_i^3=1$. So $e(R_{11})=3^r$.

\item  $R_{12}$ formed by matrices of type 
$A_i=\left( \begin{array}{ccc} \xi_i & 0 & 0 \\ 0 &\xi_i & a_i\\
0 & 0 &\xi_i \end{array} \right)$, with $\xi_i^3=1$ and
$(a_1,\ldots,a_r)\neq 0$. Then the stabilizer is
$H_{12}= \left\{ \left( \begin{array}{ccc} \mu^{-1}\gamma^{-1} & 0 & b \\ a &\mu & c\\
0 & 0 &\gamma \end{array} \right) \right\} \cong (\CC^*)^2 \x \CC^3$. So 
 $$
 e(R_{12})=3^r (q^r-1) \frac{e(\PGL(3,\CC))}{(q-1)^2q^3}.
 $$

\item $R_{13}$ formed by matrices of type 
$A_i=\left( \begin{array}{ccc} \xi_i & 0 & a_i \\ 0 &\xi_i & b_i\\
0 & 0 &\xi_i \end{array} \right)$, with $\xi_i^3=1$ with
$(a_1,\ldots,a_r), (b_1,\ldots, b_r)$ linearly independent. Note that when
they are linearly dependent, one may arrange a basis so that it
belongs to the stratum $R_{12}$. Then the stabilizer is
$H_{13}=
\left\{ \left( \begin{array}{cc} A& {b\atop c} \\ 0\,\, \, 0 & (\det A)^{-1} \end{array} \right)\right\} \cong 
\GL(2,\CC) \x \CC^2$, hence
 $$
 e(R_{13})=3^r (q^r-1)(q^r-q) \frac{e(\PGL(3,\CC))}{(q^2-1)(q^2-q)q^2}.
 $$

\item $R_{14}$ formed by matrices of type 
$A_i=\left( \begin{array}{ccc} \xi_i & a_i & b_i \\ 0 &\xi_i & 0\\
0 & 0 &\xi_i \end{array} \right)$, with $\xi_i^3=1$ and
$(a_1,\ldots,a_r), (b_1,\ldots, b_r)$ linearly independent. Note again that when
they are linearly dependent, one may arrange a basis so that it
belongs to the stratum $R_{12}$. Then the stabilizer is
$H_{14}=
\left\{ \left( \begin{array}{cc}(\det A)^{-1} & b\ \ c\\{0\atop 0}& A \end{array} \right)\right\} \cong 
\GL(2,\CC) \x \CC^2$, hence
 $$
 e(R_{14})=3^r (q^r-1)(q^r-q) \frac{e(\PGL(3,\CC))}{(q^2-1)(q^2-q)q^2}.
 $$

\item $R_{15}$ formed by matrices of type 
$A_i=\left( \begin{array}{ccc} \xi_i & a_i & b_i \\ 0 &\xi_i & c_i\\
0 & 0 &\xi_i \end{array} \right)$, with $\xi_i^3=1$ and
$(a_1,\ldots, a_r),(c_1,\ldots,c_r)$ are both non-zero (if one of them is
zero, then we are back in the case $R_{13}$).
Then  the stabilizer is
$H_{15}=\left\{ \left( \begin{array}{ccc} a & b & c \\ 0 &d & e\\
0 & 0 & f \end{array} \right) \right\}$. Hence
 $$
 e(R_{14})=3^r (q^r-1)^2q^r \frac{e(\PGL(3,\CC))}{(q-1)^2q^3}.
 $$
\end{itemize}
All together, we have
 $$
 e(R_1)=3^r \big( 1 + (1 + q + q^2) (q^{3 r+1}+ q^{3 r}- 2 q^{2 r+1}+q-1) \big).
 $$

\item  $R_{2}$ formed by matrices with eigenvalues $(\lambda_i,\lambda_i,\mu_i)$. 
Let $\bl=(\lambda_1,\ldots, \lambda_r), \bm=(\mu_1,\ldots, \mu_r)$, with
 $\bl-\bm\neq \bz$. Note that $\mu_i=\lambda_i^{-2}$,
so the parameter space has $E$-polynomial $(q-1)^r-3^r$.
We have the substrata:

\begin{itemize}
\item $R_{21}$ consists of representations of type 
$A_i=\left( \begin{array}{ccc} \lambda_i & 0 & 0 \\ 0 &\lambda_i & 0\\
0 & 0 &\mu_i \end{array} \right)$. The stabilizer
is $P(\GL(2,\CC)\x \CC^*)\cong \GL(2,\CC)$, so 
 $$
 e(R_{21})=((q-1)^r-3^r)\frac{e(\PGL(3,\CC))}{(q^2-1)(q^2-q)}.
 $$

\item $R_{22}$ consists of representations of type
$A_i=\left( \begin{array}{ccc} \lambda_i & a_i & b_i \\ 0 &\lambda_i & c_i\\
0 & 0 &\mu_i \end{array} \right)$, with $\ba\neq \bz$,
 $\bb=(b_1,\ldots,b_r), \bc=(c_1,\ldots,c_r) \in \CC^r$.
The stabilizer is $H_{22}=\left\{\left( \begin{array}{ccc} a& b& c\\ 0 &d& e\\
0 & 0 &f\end{array} \right)\right\}$. Hence
 $$
e(R_{22})=((q-1)^r-3^r) (q^r-1) q^{2r} \frac{e(\PGL(3,\CC))}{(q-1)^2q^3}.
 $$

\item $R_{23}$ consists of representations of type
$A_i=\left( \begin{array}{ccc} \lambda_i & 0 & 0 \\ 0 &\lambda_i & a_i\\
0 & 0 &\mu_i \end{array} \right)$, with $\ba \notin \la \bl-\bm\ra$. If
$\ba=x(\bl-\bm)$, $x\in \CC$, then we can arrange a basis so that this belongs
to the stratum $R_{21}$.
The stabilizer is
$H_{23}=\left\{\left( \begin{array}{ccc} a & 0 & 0 \\ b & c& d\\
0 & 0 & e \end{array} \right)\right\}$. So
 $$ 
 e(R_{23})=((q-1)^r-3^r)(q^r-q) \frac{e(\PGL(3,\CC))}{(q-1)^2q^2}.
 $$

\item $R_{24}$ consists of representations of type
$A_i=\left( \begin{array}{ccc} \lambda_i & 0 & a_i \\ 0 &\lambda_i & b_i\\
0 & 0 &\mu_i \end{array} \right)$, with $\ba, \bb$ and $\bl-\bm$ linearly independent 
(if they where linearly dependent, one can arrange a basis so that we go back
to case $R_{23}$). The stabilizer is
$H_{24}=\left\{\left( \begin{array}{cc} A & {a \atop b} \\ 
0\, \, \, 0 & (\det A)^{-1} \end{array} \right)\right\}$. Hence
 $$
e(R_{24})=((q-1)^r-3^r) (q^r-q)(q^r-q^2) \frac{e(\PGL(3,\CC))}{(q^2-1)(q^2-q)q^2}.
 $$

\item $R_{25}$ consists of representations of type
$A_i=\left( \begin{array}{ccc} \mu_i & a_i & b_i \\ 0 &\lambda_i & c_i\\
0 & 0 &\lambda_i \end{array} \right)$, with $\ba \notin \la \bl-\bm\ra$, $\bc \neq \bz$.
The stabilizer is $H_{25}=\left\{\left( \begin{array}{ccc} a& b& c\\ 0 &d& e\\
0 & 0 &f\end{array} \right)\right\}$. Hence
 $$
e(R_{25})=((q-1)^r-3^r) (q^r-1)(q^r-q) q^r\frac{e(\PGL(3,\CC))}{(q-1)^2q^3}.
 $$

\item $R_{26}$ consists of representations of type
$A_i=\left( \begin{array}{ccc} \mu_i & 0 & b_i \\ 0 &\lambda_i & c_i\\
0 & 0 &\lambda_i \end{array} \right)$, with $\bb \notin \la \bl-\bm\ra$, $\bc \neq \bz$.
(If $\bb$ is a multiple of $\bl-\bm$, then we can arrange with a suitable basis that
$\bb=0$, and this belongs to the substrata $R_{22}$).
The stabilizer is $H_{26}=\left\{\left( \begin{array}{ccc} a& 0& b\\ 0 &c& d\\
0 & 0 &e\end{array} \right)\right\}$. Hence
 $$
e(R_{26})=((q-1)^r-3^r) (q^r-1)(q^r-q) \frac{e(\PGL(3,\CC))}{(q-1)^2q^2}.
 $$

\item $R_{27}$ consists of representations of type
$A_i=\left( \begin{array}{ccc} \mu_i & a_i & 0 \\ 0 &\lambda_i & 0\\
0 & 0 &\lambda_i \end{array} \right)$, with $\ba \notin \la \bl-\bm\ra$. 
The stabilizer is $H_{27}=\left\{\left( \begin{array}{ccc} a& b& 0\\ 0 &c& 0\\
0 & d &e\end{array} \right)\right\}$. Hence
 $$
e(R_{27})=((q-1)^r-3^r) (q^r-q) \frac{e(\PGL(3,\CC))}{(q-1)^2q^2}.
 $$

\item $R_{28}$ consists of representations of type
$A_i=\left( \begin{array}{ccc} \mu_i & a_i & b_i \\ 0 &\lambda_i & 0\\
0 & 0 &\lambda_i \end{array} \right)$, with $\ba,\bb, \bl-\bm$ linearly independent (otherwise
we can reduce to the case $R_{27}$). 
The stabilizer is $H_{28}=\left\{\left( \begin{array}{cc} (\det A)^{-1} & b \,\,\, c\\ {0\atop 0} & A\end{array} \right)\right\}$. Hence
 $$
e(R_{28})=((q-1)^r-3^r) (q^r-q)(q^r-q^2) \frac{e(\PGL(3,\CC))}{(q^2-q) (q^2-1)q^2}.
 $$

\item $R_{29}$ consists of representations of type
$A_i=\left( \begin{array}{ccc} \lambda_i & a_i & b_i \\ 0 &\mu_i & c_i\\
0 & 0 &\lambda_i \end{array} \right)$, with $\ba,\bc \notin \la\bl-\bm\ra$.
The stabilizer is $H_{29}=\left\{\left( \begin{array}{ccc} a& b& c\\ 0 &d& e\\
0 & 0 &f\end{array} \right)\right\}$. Hence
 $$
e(R_{29})=((q-1)^r-3^r) (q^r-q)^2q^r  \frac{e(\PGL(3,\CC))}{(q-1)^2q^3}.
 $$

\end{itemize}
All together, we have
 \begin{align*}
 e(R_2)= & ((q-1)^r-3^r) 
(q^2+q+1)(3 q^{3 r+1}+ 3 q^{3 r} - 2 q^{2 r+2} - 4 q^{ 2 r+1} +q^3).
 \end{align*}

\item  $R_{3}$ is formed by matrices with eigenvalues $(\lambda_i,\mu_i,\gamma_i)$ such that 
$\bl=(\lambda_1,\ldots,\lambda_r), 
\bm=(\mu_1,\ldots,\mu_r), 
\bg=(\gamma_1,\ldots,\gamma_r)$ satisfy $\bl-\bm, \bm-\bg, \bl-\bg \neq \bz$.
Note that $\lambda_i\mu_i\gamma_i=1$ for all $1\leq i\leq r$.
The
base $B_r$ parametrizing $(\bl,\bm,\bg)$ has $E$-polynomial
$e(B_r)=(q-1)^{2r}-3(q-1)^{r}+2 \cdot 3^r$.

\begin{itemize}
\item $R_{31}$ consists of representations of type
$A_i=\left( \begin{array}{ccc} \lambda_i & 0& 0 \\ 0 &\mu_i & 0\\
0 & 0 &\gamma_i \end{array} \right)$. Then the stabilizer is
 $D\x \Sigma_3$, where $D$ are the diagonal matrices. So
we have to compute the $E$-polynomial of the quotient 
$R_{31}=(\PGL(3,\CC)/D \times B_r)/\Sigma_3$.
We start by computing $e_{\Sigma_3}(B_r)$. Let $B_r'=\{(\bl,\bm,\bg) \in (\CC^*)^{3r}\ |\ \bl\bm\bg=(1,\ldots,1)\}$.
This is $B_r'=(B')^r$, in the notation of Remark \ref{rem:ld}. Then 
 $$
  e_{\Sigma_3}(B_r') = e_{\Sigma_3}(B')^r = (q^2 T+ S-qV)^r.
 $$
Using the properties 
$T\ox T=T$, $T\ox S=S$, $T\ox V=V$, $S\ox S=T$, $S\ox V=V$, $V\ox V=T\oplus S\oplus V$, 
it is easy to see that $V^b= a_b V + a_{b-1} (T+S)$, where $a_b=a_{b-1}+2a_{b-2}$, with
$a_0=0, a_1=1$. This recurrence solves as $a_b=(2^b-(-1)^b)/3$. Therefore:
 \begin{align}\label{eqn:Br'}
  e_{\Sigma_3}(B_r') =& \,  (q^2 T+ S-qV)^r\\
  =&  \,\sum \frac{r!}{(r-a-b)!a!b!} q^{2(r-a-b)} S^a (-q)^b V^b \nonumber \\
  =&  \,\sum \frac{r!}{(r-a)!a!} q^{2(r-a)} S^a +
  \sum_{b>0} \frac{r!}{(r-a-b)!a!b!} q^{2(r-a-b)} S^a (-q)^b V^b \nonumber \\
  =&  \, \frac{(q^2+1)^r+(q^2-1)^r}2 T +\frac{(q^2+1)^r-(q^2-1)^r}2 S \nonumber \\ 
   &+ \sum_{b>0} \frac{r!}{(r-a-b)!a!b!} q^{2(r-a-b)} S^a (-q)^b \left( \frac{2^b-(-1)^b}{3} V+ 
  \frac{2^{b-1}-(-1)^{b-1}}{3} (T+S) \right) \nonumber \\
  =&  \, \frac{(q^2+1)^r+(q^2-1)^r}2 T +\frac{(q^2+1)^r-(q^2-1)^r}2 S 
   + \frac13 \left( (q^2-2q+1)^r -(q^2+q+1)^r \right) V \nonumber \\
   &+ \frac13 \left( \frac12 (q^2-2q+1)^r + (q^2+q+1)^r -  \frac32 (q^2+1)^r \right) (T+S)\nonumber  \\
  =&  \, \left(\frac{(q^2-1)^r}2+\frac16 (q-1)^{2r} + \frac13 (q^2+q+1)^r\right) T  \nonumber \\
   & + \left(- \frac{(q^2-1)^r}2+\frac16 (q-1)^{2r} + \frac13 (q^2+q+1)^r\right) S 
   + \frac13 \left( (q-1)^{2r} -(q^2+q+1)^r \right) V . \nonumber
 \end{align}
 
Now we have to look at the part that we removed: $C_r=\{(\bl,\bl,\bl^{-2})\ |\ \bl\in (\CC^*)^r\}
\cup\{(\bl,\bl^{-2},\bl)\ |\ \bl\in (\CC^*)^r\} \cup\{(\bl^{-2},\bl,\bl)\ |\ \bl\in (\CC^*)^r\}$.
Then $e(C_r)=3(q-1)^r-2\cdot 3^r$. The quotient $C_r/\Sigma_3\cong (\CC^*)^r$, 
so $e(C_r/\Sigma_3)=(q-1)^r$. And for $H=\la (1,2)\ra$ we have $C_r/H \cong 
\{(\bl,\bl,\bl^{-2})\ |\ \bl\in (\CC^*)^r\} \cup\{(\bl,\bl^{-2},\bl)\ |\ \bl\in (\CC^*)^r\}$,
so $e(C_r/H)=2(q-1)^r - 3^r$. Hence,
 $$
  e_{\Sigma_3}(C_r)= (q-1)^r T+ ((q-1)^r-3^r)V.
  $$
For $B_r=B_r'-C_r$, we have 
 \begin{align}\label{eqn:S3}
 e_{\Sigma_3}(B_r)
 =&  \, \left(\frac{(q^2-1)^r}2+\frac16 (q-1)^{2r} + \frac13 (q^2+q+1)^r - (q-1)^r \right) T  \\
   & + \left(- \frac{(q^2-1)^r}2+\frac16 (q-1)^{2r} + \frac13 (q^2+q+1)^r\right) S \nonumber \\
   &+\left(  \frac13 (q-1)^{2r} - \frac13 (q^2+q+1)^r -(q-1)^r+3^r\right) V .\nonumber 
 \end{align}

Hence Formula (\ref{eqn:ta}) and Lemma \ref{lem:ld} imply
  \begin{align*}
e(R_{31}) &= aa'+bb'+cc' \\
 &=\left(\frac{(q^2-1)^r}2+\frac16 (q-1)^{2r} + \frac13 (q^2+q+1)^r - (q-1)^r \right) q^6  \\
   & + \left(- \frac{(q^2-1)^r}2+\frac16 (q-1)^{2r} + \frac13 (q^2+q+1)^r\right) q^3 \\
   &+ \left(\frac13  (q-1)^{2r} -\frac13 (q^2+q+1)^r -(q-1)^r+3^r\right) (q^5+q^4).
 \end{align*}

\item $R_{32}$ consists of representations of type
$A_i=\left( \begin{array}{ccc} \lambda_i & 0& 0 \\ 0 &\mu_i & a_i\\
0 & 0 &\gamma_i \end{array} \right)$ with
$\ba\notin \la \bm-\bg \ra$. The stabilizer is 
$H_{32}=\left\{\left( \begin{array}{ccc} a& 0& 0 \\ 0 &b& c\\
0 & 0 &d \end{array} \right)\right\}$. Hence,
 $$
e(R_{32})=((q-1)^{2r}-3(q-1)^{r}+2\cdot 3^r) (q^r-q)\frac{e(\PGL(3,\CC))}{(q-1)^2q}.
 $$

\item $R_{33}$ consists of representations of type
$A_i=\left( \begin{array}{ccc} \lambda_i & 0& a_i \\ 0 &\mu_i & b_i\\
0 & 0 &\gamma_i \end{array} \right)$ with
$\ba\notin \la \bl-\bg \ra$, $\bb\notin \la \bm-\bg \ra$. The stabilizer is 
$H_{33}=\left\{\left( \begin{array}{ccc} a& 0& b \\ 0 &c& d\\
0 & 0 &e \end{array} \right)\right\} \x \ZZ_2$, where $\ZZ_2$ permutes
the eigenvalues $\lambda_i,\mu_i$. Therefore,
 $$
 R_{33}=\big( B_{r} \x (\CC^r-\CC)^2 \x (\PGL(3,\CC)/ H_{33}) \big)/\ZZ_2.
 $$
By (\ref{eqn:S3}), we have that
 \begin{align*}
 e_{H}(B_r)
 =\, & \left(\frac{(q^2-1)^r}2+\frac12 (q-1)^{2r}- 2(q-1)^r+3^r \right) T  \\
    &+ \left(- \frac{(q^2-1)^r}2+\frac12 (q-1)^{2r}- (q-1)^r+3^r \right) N ,
 \end{align*}
since under $H\subset \Sigma_3$, we have $T\mapsto T$, $S\mapsto N$, $V\mapsto T+N$.
For the second factor, $e((\CC^r-\CC)^2)=(q^r-q)^2$ and $e(\Sym^2(\CC^r-\CC))=q^{2r}-q^{r+1}$, so
 $$
 e_H( (\CC^r-\CC)^2)=(q^{2r}-q^{r+1}) T+ (q^2-q^{r+1}) N.
 $$
Finally, $\PGL(3,\CC)/ H_{33}\cong \PP^2\x \PP^2-\Delta$, by considering the first two columns of the matrix,
and where $\Delta$ is the diagonal. As $e(\PP^2\x \PP^2-\Delta)=(1+q+q^2)(q+q^2)$ and
$e(\Sym^2\PP^2-\bar\Delta)=q^4+q^3+q^2$, we have
 $$ 
 e_H(\PGL(3,\CC)/ H_{33})=(q^4+q^3+q^2)T+(q^3+q^2+q)N.
 $$
Hence,
 \begin{align*}
 e(R_{33}) = &\,  \left(\frac{(q^2-1)^r}2+\frac12 (q-1)^{2r}- 2(q-1)^r+3^r \right) 
 \big( (q^{2r}-q^{r+1})(q^4+q^3+q^2) + (q^2-q^{r+1})(q^3+q^2+q)\big) \\
 & +  \left(- \frac{(q^2-1)^r}2+\frac12 (q-1)^{2r}- (q-1)^r+3^r \right) 
 \big((q^{2r}-q^{r+1}) (q^3+q^2+q)+ (q^2-q^{r+1})(q^4+q^3+q^2)\big) .
\end{align*}

\item $R_{34}$ consists of representations of type
$A_i=\left( \begin{array}{ccc} \lambda_i & a_i& b_i \\ 0 &\mu_i & 0\\
0 & 0 &\gamma_i \end{array} \right)$ 
with $\ba\notin \la \bl-\bm \ra$, $\bb\notin \la \bl-\bg \ra$. The stabilizer is 
$H_{34}=\left\{\left( \begin{array}{ccc} a& b& c \\ 0 &d& 0\\
0 & 0 &e \end{array} \right)\right\}$. The computations are analogous to the case of $R_{33}$, so
$e(R_{33})=e(R_{34})$.

\item $R_{35}$ consists of representations of type
$A_i=\left( \begin{array}{ccc} \lambda_i & a_i& b_i \\ 0 &\mu_i & c_i\\
0 & 0 &\gamma_i \end{array} \right)$
with $\ba\notin \la \bl-\bm \ra$, $\bc\notin \la \bm-\bg \ra$. The stabilizer is 
$\left\{\left( \begin{array}{ccc} a& b& c \\ 0 &d& e\\
0 & 0 &f \end{array} \right)\right\}$. Hence
 $$
e(R_{35})=((q-1)^{2r}-3(q-1)^{r}+2\cdot
3^r) (q^r-q)^2q^r\frac{e(\PGL(3,\CC))}{(q-1)^2q^3}.
 $$
\end{itemize}
All together, we have:
 \begin{align*}
 e(R_3)= & (2\cdot 3^r - 3 (q-1)^r + (q-1)^{2 r})(q+1)(q^2+q+1)( q^r-q) (q^2 + q^{2 r} - q^{1 + r})  \\ 
  & +  (2 \cdot 3^r -  2 (q-1)^r + (q-1)^{2 r} - (q^2-1)^r) q (q^2+q+1) (q^r-q)(q^r-q^2) \\
  &+   (2 \cdot 3^r - 4 (q-1)^r + (q-1)^{2 r} + (q^2-1)^r) q^2 (q^2+q+1) ( q^r-1) (q^r-q)+  \\ 
  &+  \frac16 q^3 ((q-1)^{2 r} - 3 (q^2-1)^r + 2 (q^2+q+1)^r + 
    2 q (q+1) (3^{r+1 } - 3 ( q-1)^r + (q-1)^{2 r} - (q^2+q+1)^r) ) \\ 
& +   \frac16 q^6 (-6 (q-1)^r + (q-1)^{2 r} + 3 (q^2-1)^r + 2 (q^2+q+1)^r).
 \end{align*}
\end{enumerate}

Therefore,
  \begin{align*}
e(\cR^{red}_{r,3})= & \frac13 (q^2+q+1)^r (q-1 )^2 q^3 (q+1 ) +(q^2+q+1 ) (2q^{2r}-q^2) (q-1)^{2r} q^r (q+1)^r \\
 &-\frac13 ( q-1)^{2r}(q+1)(q^2+q+1)(3q^{3r}- 3q^{r+2}+ q^3),
 \end{align*}
and so,
 $$
 e(\cR^{irr}_{r,3})= e(\cR_{r,3})- e(\cR^{red}_{r,3}) = e(\SL(3,\CC))^r-e(\cR^{red}_{r,3}) ,
 $$
and consequently,
$$
e(\cM^{irr}_{r,3})=e(\cR^{irr}_{r,3})/e(\PGL(3,\CC))= e(\SL(3,\CC))^{r-1}-e(\cR^{red}_{r,3})/e(\SL(3,\CC)). 
 $$

\subsection*{$E$-polynomial of the moduli of reducible representations}
To compute $e(\cM_{r,3})$, it remains to compute the moduli space of reducible representations $\cM^{red}_{r,3}$.
This is formed by two strata:
 \begin{enumerate}
 \item $M_0$ formed by semisimple representations which split into irreducible representations of ranks 
$1$ and $2$, that is, of the form:
  $$
 A_i=\left(\begin{array}{cc} \lambda_i^{-2} & 0\, \,\,  0 \\
 {0 \atop 0} &\lambda_i B_i\end{array}\right),
 $$ 
where $(B_1,\ldots,B_r)\in \cM^{irr}_{r,2}$. So $e(M_0)=(q-1)^r e(\cM^{irr}_{r,2})$.
\item $M_1$ formed by semisimple representations which split into three irreducible representations of rank $1$.
These are given by eigenvalues $\bl=(\lambda_1,\ldots,\lambda_r), 
\bm=(\mu_1,\ldots,\mu_r), \bg=(\gamma_1,\ldots,\gamma_r) \in (\CC^*)^r$ where $\lambda_i\mu_i\gamma_i=1$
for all $1\leq i\leq r$. This is the space $B_r'$ whose $E$-polynomial has been computed in (\ref{eqn:Br'}). Thus
 $$
 e(M_1)=e(B_r'/\Sigma_3)= \frac{(q^2-1)^r}2+\frac16 (q-1)^{2r} + \frac13 (q^2+q+1)^r.
 $$
\end{enumerate}
Finally, $e(\cM^{red}_{r,3})=e(M_0)+e(M_1)$, and adding up everything we get
 \begin{align*}
 e(\cM_{r,3}) =& e(\cM^{irr}_{r,3})+e(\cM^{red}_{r,3}) \\
 =& (q^8-q^6-q^5+q^3)^{r-1}+
(q-1)^{2r-2} (q^{3r-3} -q^{r}) + 
\frac16 (q-1)^{2r-2} q(q+1)  \\ &+
 \frac12 (q^2-1)^{r-1}q(q-1)  + 
 \frac13 (q^2+q+1)^{r-1} q(q + 1) - (q-1)^{r-1} q^{r-1} ( q^2-1)^{r-1} (2 q^{2r-2}-q).
 \end{align*}

This completes the main part of the proof of Theorem \ref{thm:main}.  It remains to show that $e(\cM_{r,3})=e(\bar\cM_{r,3})$, which we do in the following section.

\begin{rem}
By \cite{FL2}, the singular locus of $\cM_{r,3}$ is exactly the reducible locus $($and so the smooth locus is its complement$)$.  Therefore, the above computation of $M_0$ and $M_1$ gives the $E$-polynomial of the singular locus of $\cM_{r,3}$.  Likewise, $e(\cM^{irr}_{r,3})$ is the $E$-polynomial of the smooth locus of $\cM_{r,3}$.  Moreover, by \cite{FL3}, the abelian character variety $\cM(\mathbb{Z}^r,\SL(3,\CC))$ is exactly the diagonalizable representations in $\cM_{r,3}$.  The above computation of $M_1$ gives the $E$-polynomial of $\cM(\mathbb{Z}^r,\SL(3,\CC))$.  In each case, setting $q=1$ gives the Euler characteristic of the corresponding space.
\end{rem}

\section{$E$-polynomials of character varieties for $F_r$, $r>1$, and $\PGL(3,\CC)$}\label{pgl3}

In this final section, we focus on the space of representations
 \begin{align*}
\bar\cR_{r,3} &= \Hom(F_r,\PGL(3,\CC))=\{\rho:F_r\to \PGL(3,\CC)\} 
 = \{(A_1,\ldots, A_r) | A_i\in \PGL(3,\CC) \}=\PGL(3,\CC)^r
 \end{align*}
and the character variety
 $$
 \bar\cM_{r,3}=\Hom(F_r,\PGL(3,\CC))\quot\PGL(3,\CC).
 $$
Let $\zeta=e^{2\pi \sqrt{-1}/3}$, and let $\ZZ_3=\{1,\zeta,\zeta^2\}$ be the space
of cubic roots of unity. Then  $\PGL(3,\CC)=\SL(3,\CC)/\ZZ_3$,
 \begin{align*}
  \bar\cR_{r,3} &= \cR_{r,3}/( \ZZ_3)^r, \text{ and }\\
  \bar\cM_{r,3} &= \cM_{r,3}/( \ZZ_3)^r 
 \end{align*}
where $(\zeta^{a_1},\ldots, \zeta^{a_r})$ acts as $(A_1,\ldots, A_r) \mapsto
(\zeta^{a_1}A_1,\ldots, \zeta^{a_r}A_r)$. Clearly $\bar\cR_{r,3}^{red}= \cR_{r,3}^{red}/( \ZZ_3)^r$
and $\bar\cR_{r,3}^{irr}= \cR_{r,3}^{irr}/( \ZZ_3)^r$.

We know from Lemma \ref{lem:PGL3} that $e(\PGL(3,\CC))=e(\SL(3,\CC))$. Let us see now that
$e(\bar\cR_{r,3}^{red})=e( \cR_{r,3}^{red})$. We stratify $\bar\cR_{r,3}^{red}=\bar R_0\sqcup \bar R_1
\sqcup \bar R_2 \sqcup \bar R_3$, where $\bar R_i=R_i /(\ZZ_3)^r$ and the $R_i$, $i=0,1,2,3$, have been
defined in Section \ref{sec:SL3}.  

We now list the strata with the computation of their $E$-polynomials:

\begin{enumerate}
\item $\bar R_0=\bar R_{01}\cup \bar R_{02}$, where $\bar R_{0j}=R_{0j}/(\ZZ_3)^r$, $j=1,2$.
To compute $e(\bar R_{01})$, recall that $R_{01}$ is formed by representations $\rho=(A_1,\ldots, A_r)$ with
 $$
 A_i=\left(\begin{array}{cc} \lambda_i^{-2} & b_i\, \,\,  c_i \\
 {0 \atop 0} &\lambda_i B_i\end{array}\right),
 $$ 
where $(B_1,\ldots, B_r) \in \cR^{irr}_{2,r}$. The action of $\zeta^{a_i}$ on $A_i$ is given by
$(\lambda_i, b_i,c_i, B_i)\mapsto (\zeta^{a_i}\lambda_i, \zeta^{a_i}b_i,\zeta^{a_i}c_i, \zeta^{a_i}B_i)$.
Note that $\CC/\ZZ_3 \cong \CC$ and $\CC^*/\ZZ_3 \cong \CC^*$, so the relevant cohomology is invariant.
Therefore 
 $$
 e(((\CC^*)^r\x \CC^r \x \CC^r \x \cR^{irr}_{2,r})/(\ZZ_3)^r)=  e(\CC^*)^r e(\CC)^r e(\CC)^r e(\cR^{irr}_{2,r}/(\ZZ_3)^r).
 $$ 
This means that $e(\bar R_{01})=e(R_{01})$. Analogously $e(\bar R_{02})=e(R_{02})$
and $e(\bar R_{01}\cap \bar R_{02})=e(R_{01}\cap R_{02})$, so $e(\bar R_{0})=e(R_{0})$.

\item $\bar R_1=R_1/(\ZZ_3)^r$. Note that $R_1$ is formed by $3^r$ copies of the same subvariety, hence
 $$
 e(\bar R_1)=\frac{e(R_1)}{3^r}= 1 + (1 + q + q^2) (q^{3 r+1}+ q^{3 r}- 2 q^{2 r+1}+q-1) .
 $$

\item  $\bar R_{2}=R_2/(\ZZ_3)^r$. Recall that $R_2$ is formed by matrices with eigenvalues $(\lambda_i,\lambda_i,\mu_i)$
where $\bl=(\lambda_1,\ldots, \lambda_r) \in P=(\CC^*)^r - \{1,\zeta,\zeta^2\}^r$. Now
$\bar P=P/(\ZZ_3)^r \cong (\CC^*)^r-\{(1,1,\ldots, 1)\}$, so $e(\bar P)=(q-1)^r-1$.
It is more or less straightforward to see that $\bar R_2$ can be stratified by $\bar R_{2j}=R_{2j}/(\ZZ_3)^r$, $j=1,2,\ldots, 9$.
For each $\bar R_{2j}$ the computation of $e(\bar R_{2j})$ is the same as that of $e(R_{2j})$ but replacing
$e(P)=(q-1)^r-3^r$ by $e(\bar P)=(q-1)^r-1$. Hence 
 \begin{align*}
 e(\bar R_2)= & ((q-1)^r-1)  (q^2+q+1)(3 q^{3 r+1}+ 3 q^{3 r} - 2 q^{2 r+2} - 4 q^{ 2 r+1} +q^3).
 \end{align*}

\item  $\bar R_{3}=R_3/(\ZZ_3)^r$. We follow the lines of the computation of $e(R_3)$. The base for the
space of eigenvalues is $\bar B_r=B_r/(\ZZ_3)^r$ with $e(\bar B_r)=(q-1)^{2r}-3(q-1)^{r}+2$.

\begin{itemize}
\item Let $\bar R_{31}=R_{31}/(\ZZ_3)^r \cong (\PGL(3,\CC)/D \times \bar B_r)/\Sigma_3$.
If $\bar B_r'=B_r'/(\ZZ_3)^r$, then easily 
$e_{\Sigma_3}(\bar B_r') = e_{\Sigma_3}(\bar B')^r = (q^2 T+ S-qV)^r =e_{\Sigma_3}(B_r')$.
For $\bar C_r=C_r/(\ZZ_3)^r$, we have instead that  $e_{\Sigma_3}(\bar C_r)= (q-1)^r T+ ((q-1)^r-1)V$,
so $\bar B_r=\bar B_r'-\bar C_r$ has 
 \begin{align*} 
 e_{\Sigma_3}(B_r)
 =&  \, \left(\frac{(q^2-1)^r}2+\frac16 (q-1)^{2r} + \frac13 (q^2+q+1)^r - (q-1)^r \right) T  \\
   & + \left(- \frac{(q^2-1)^r}2+\frac16 (q-1)^{2r} + \frac13 (q^2+q+1)^r\right) S \\
   &+\left(  \frac13 (q-1)^{2r} - \frac13 (q^2+q+1)^r -(q-1)^r+1\right) V ,
 \end{align*}
 and 
  \begin{align*}
e(\bar R_{31}) 
 &=\left(\frac{(q^2-1)^r}2+\frac16 (q-1)^{2r} + \frac13 (q^2+q+1)^r - (q-1)^r \right) q^6  \\
   & + \left(- \frac{(q^2-1)^r}2+\frac16 (q-1)^{2r} + \frac13 (q^2+q+1)^r\right) q^3 \\
   &+ \left(\frac13  (q-1)^{2r} -\frac13 (q^2+q+1)^r -(q-1)^r+1\right) (q^5+q^4).
 \end{align*}

\item $\bar R_{32}=R_{32}/(\ZZ_3)^r$ has 
 $$
e(\bar R_{32})=((q-1)^{2r}-3(q-1)^{r}+2) (q^r-q)\frac{e(\PGL(3,\CC))}{(q-1)^2q}.
 $$

\item $\bar R_{33}=R_{33}/(\ZZ_3)^r \cong \big( \bar B_{r} \x (\CC^r-\CC)^2 \x (\PGL(3,\CC)/ H_{33}) \big)/\ZZ_2$,
where $H=\ZZ_2$ acts by swapping the first two eigenvalues. Now
 \begin{align*}
 e_{H}(\bar B_r)
 =\, & \left(\frac{(q^2-1)^r}2+\frac12 (q-1)^{2r}- 2(q-1)^r+1 \right) T  \\
    &+ \left(- \frac{(q^2-1)^r}2+\frac12 (q-1)^{2r}- (q-1)^r+1 \right) N ,
 \end{align*}
so
 \begin{align*}
 e(\bar R_{33}) = &\,  \left(\frac{(q^2-1)^r}2+\frac12 (q-1)^{2r}- 2(q-1)^r+1 \right) 
 \big( (q^{2r}-q^{r+1})(q^4+q^3+q^2) + (q^2-q^{r+1})(q^3+q^2+q)\big) \\
 & +  \left(- \frac{(q^2-1)^r}2+\frac12 (q-1)^{2r}- (q-1)^r+1 \right) 
 \big((q^{2r}-q^{r+1}) (q^3+q^2+q)+ (q^2-q^{r+1})(q^4+q^3+q^2)\big) .
\end{align*}

\item $\bar R_{34}=R_{34}/(\ZZ_3)^3$  has $e(\bar R_{34})=e(\bar R_{33})$.
\item $\bar R_{35}=R_{35}/(\ZZ_3)^r$ has
 $$
e(\bar R_{35})=((q-1)^{2r}-3(q-1)^{r}+2) (q^r-q)^2q^r\frac{e(\PGL(3,\CC))}{(q-1)^2q^3}.
 $$
\end{itemize}

All together, we have:
 \begin{align*}
 e(\bar R_3)= & (2 - 3 (q-1)^r + (q-1)^{2 r})(q+1)(q^2+q+1)( q^r-q) (q^2 + q^{2 r} - q^{ r+1})  \\ 
  & +  (2 -  2 (q-1)^r + (q-1)^{2 r} - (q^2-1)^r) q (q^2+q+1) (q^r-q)(q^r-q^2) \\
  &+   (2  - 4 (q-1)^r + (q-1)^{2 r} + (q^2-1)^r) q^2 (q^2+q+1) ( q^r-1) (q^r-q)+  \\ 
  &+  \frac16 q^3 ((q-1)^{2 r} - 3 (q^2-1)^r + 2 (q^2+q+1)^r + 
    2 q (q+1) (3  - 3 ( q-1)^r + (q-1)^{2 r} - (q^2+q+1)^r) ) \\ 
& +   \frac16 q^6 (-6 (q-1)^r + (q-1)^{2 r} + 3 (q^2-1)^r + 2 (q^2+q+1)^r).
 \end{align*}
\end{enumerate}

Adding up all the contributions we get:
  \begin{align*}
e(\bar\cR^{red}_{r,3})= & \frac13 (q^2+q+1)^r (q-1 )^2 q^3 (q+1 ) +(q^2+q+1 ) (2q^{2r}-q^2) (q-1)^{2r} q^r (q+1)^r \\
 &-\frac13 ( q-1)^{2r}(q+1)(q^2+q+1)(3q^{3r}- 3q^{r+2}+ q^3) \\
 =& e(\cR^{red}).
 \end{align*}
From this 
$e(\bar\cR^{irr}_{r,3})= e(\cR^{irr}_{r,3})$ and  $e(\bar\cM^{irr}_{r,3})=e(\cM^{irr}_{r,3})$.

The remaining thing to compute is $e(\bar\cM_{r,3}^{red})$. This is formed by two strata:
 \begin{enumerate}
 \item $\bar M_0=M_0/(\ZZ_3)^r \cong ((\CC^*)^r \x \cM^{irr}_{r,2})/(\ZZ_3)^r$. Hence
$e(\bar M_0)=(q-1)^r e(\bar\cM^{irr}_{r,2})=e(M_0)$.
\item $\bar M_1=M_1/(\ZZ_3)^r \cong (((\CC^*)^r)/(\ZZ_3)^r)/\Sigma_3 \cong (\CC^*)^r/\Sigma_3$.
So $e(\bar M_1)=e(M_1)$.
\end{enumerate}
We get finally $e(\bar\cM^{red}_{r,3})=e(\cM^{red}_{r,3})$. This
concludes the proof of the equality $e(\bar\cM_{r,3})=e(\cM_{r,3})$.

\begin{rem}
There is an arithmetic argument communicated to us by S. Mozgovoy
to prove that $e(\bar\cM_{r,n})=e(\cM_{r,n})$ for $n$ odd. It goes as follows:
find infinitely many primes $p$ such that $p-1$ and $n$
are coprime $($by Dirichlet's theorem on arithmetic progressions$)$; then
$\SL(n,\FF_p) \to \PGL(n,\FF_p)$ is bijective and one gets a bijection between
 corresponding character varieties over $\FF_p$. So the count number
of points of $\cM_{r,n}$ and $\bar\cM_{r,n}$ over $\FF_p$ coincide,
and hence the $E$-polynomials coincide. 

However this argument cannot be used for even $n$. Despite this, the $E$-polynomials for the $\SL(2,\CC)$-character varieties of free groups do equal those of $\PGL(2,\CC)$.  We expect to address the case of $\SL(4,\CC)$ in future work. 
\end{rem}

\end{document}